\newtheorem{theorem}{Theorem}
\newtheorem{lemma}{Lemma}
\theoremstyle{definition}
\newtheorem{definition}{Definition}
\newtheorem{problem}{Problem}
\newtheorem{remark}{Remark}
\newtheorem{proposition}{Proposition}
\begin{document}

{\large

\begin{center}
{\bf On verbally closed subgroups of free solvable groups}
\end{center}
\begin{center}
{\bf V. A. ROMAN'KOV\footnote{This research is supported  by program of fundamental investigations   of  SB RAS: I. 1.1.4, project 0314-2019-0004}, E. I. TIMOSHENKO\footnote{This research is supported  by RFBR, project18-01-00100a}}

\end{center}

Abstract: We study verbally closed subgroups of free solvable groups. A number of results is proved that give  sufficient conditions under whose a verbally closed subgroup is turned to be a retract and so algebraically closed of the full group.

\begin{center}
{\bf INTRODUCTION}
\end{center}
\medskip
 Algebraically closed objects play an important role in the modern algebra. 
In this paper we study   verbally and $l$-verbally closed  subgroups of free solvable groups. We establish a connection of them with  retracts  and algebraically  closed subgroups.  

Let ${\cal K}$ be a class of algebraic structures  of language  $\mathbf{L}$. A structure     $A \in {\cal K}$
is said to be {\it algebraically closed}  in  ${\cal K}$ if and only if for any  positive $\exists $-sentence   $\varphi (x_1, ..., x_n)$
in $\mathbf{L}$ with constants in  $A,$ if $\varphi (x_1, ..., x_n)$ is true in a structure  
$B \in {\cal K},$  that has  $A$ as a substructure, then  $\varphi (x_1, ..., x_n)$ is true in  $A.$
 See for instances, \cite{Hod}, \cite{Scott}.

An interesting specific case  arises when we take as
${\cal K}$ the class  sub$(B)$  of all substructures of a structure  $B.$ Moreover, we can take as  ${\cal K}$  a pair $\{A, B\},$  where $A$   is a substructure of $B.$ Then we can say about {\it algebraic closeness} of $A$ in $B.$

Now let $\mathbf{L}$ be the group-theoretical language and
${\cal K}$ be a class of groups. In this case, the notion above can be presented in pure group-theoretical  terminology.  For groups 
$H$ and $G$ we write $H \leq G,$ if  $H$ is a subgroup of $G,$  and we call  $G$ {\it extension}   of $H.$  By  an  {\it equation}  $w(x_1, ..., x_n; H)= h, h \in H,$ in {\it variables} ({\it unknowns}) $x_1, ..., x_n$ with constants  in $H$ we mean  an expression in which  $w(x_1, ..., x_n; H)$  is a group word in $x_1, ..., x_n$ and constants in $H.$  We say that an equation $w(x_1, ..., x_n;H) = h$  has a  {\it solution (is solvable)} in $H$ (or more generally, in an extension $G \geq H$)   if there is a tuple  of elements   $g_1, ..., g_n$  of  $H$ (or more generally of $G$)  if after a substitution them instead of  $x_1, ..., x_n,$  respectively, we have  the true equality $w(g_1, ..., g_n;H) = h. $ An equation of the form $w(x_1, ..., x_n) = h$,     in which the left-hand side does not include constants, but only variables, is called,    {\it split} equation.

In a natural way we define  a {\it system of equation} and a  {\it solution }  of it.

A notion of an algebraically closed subgroup of a group is rewritten as follows.
\begin{definition}
\label{de:1}
Let  $G$  be a group.
A subgroup   $H\leq G$ is called  {\it algebraically closed} in $G,$ if any  finite system of equations with constants in   $H$ that has a solution in     $G$ has a solution  in $H.$
\end{definition}
In   \cite{MR},    the following definition was given.
\begin{definition}
Let $G$  be a group. A subgroup  $H \leq G$ is called  {\it verbally closed} in      $G,$ if for any group word     $w(x_1, ..., x_n)$  in  variables   $ x_1, ..., x_n$  without constants and each  $h \in H$ a split equation 
\begin{equation}
\label{eq:1}
w(x_1, ..., x_n) = h,
\end{equation}
\noindent
that has a solution in  $G,$ is solvable in  $H.$
\end{definition}
This notion, more weak than algebraic closeness,  meanwhile corresponds to  much more strong notion of retract.
\begin{definition}
 {\it Retract}   of a group    $G$  is a subgroup  $H \leq G$  such that there is an endomorphism   $\rho :    G  \rightarrow   H$   that is identical on
$H.$  Such an endomorphism   $\rho $ is called  {\it retraction}.
\end{definition}
Let   $F_r$   be a free group of rank    $r.$  Obviously, each free factor of $F_r$ is retract. For    $r \geq 2$, $F_r$  admits retracts that are not free factors.  For example, any element of the form  $g = f_1u$ of    $F_r$ with basis $\{f_1, ..., f_r\}$ where  $u$  belongs to the normal closure of       $f_2, ..., f_r,$ generates a cyclic retract  $R$.  In this case, a retraction     $\rho : F_r \rightarrow R,$ is defined by map 
$ f_1 \mapsto g, f_j \mapsto 1$  for  $j = 2, ..., r,$ but a cyclic group generated by $g,$ is a free factor of $F_r$ if and only if   $g$ is primitive, i.e., it can be included in some basis of $F_r.$ If $r=2$ and $u$ lies in the commutant of  $F_2$, then  $g=f_1u$ is primitive if and  only if it conjugates to  $f_1$. This assertion is followed from  a classical Nielsen's result,  that every automorphism of  $F_2$ identical modulo commutant is inner.  Element $g=f_1f_2^{-1}f_1^{-1}f_2f_1$ of $F_2$ generates retract but not a free factor. 

\begin{remark}
\label{re:1}
If $H$ is a retract of  $G$ then  $H$  is algebraically closed in  $G$. Indeed, to obtain a solution in $H$ of an equation $w(x_1, ..., x_n; H) = h$  from a solution $x_i=g_i\,  (i=1, ..., n)$ in $G$, it is sufficient to apply a retraction $\rho : G \rightarrow H.$ Then $w(g_1, ..., g_n, H)=h$ implies $\rho (w(g_1, ..., g_n); \rho (H))=w(\rho (g_1), ..., \rho (g_n); H)=\rho (h)=h.$  Obviously that any algebraically closed subgroup   $H$ of $G$  is verbally closed in $G$. At the same time, among these three concepts there are no two  equivalent one to other. In thesis \cite{Mthesis}, there are examples of algebraically closed subgroups that are not retracts, as well as examples of verbally closed subgroups that are not algebraically closed.  
\end{remark}

In  \cite{Berg},   the author proves that intersection of a finite set of retracts of a free group  $F_r$   is retract.   In \cite{KT},  there is a question   17.19  (the same as the question   F11  in  \cite{Open}): If    $H$  is a finitely generated subgroup of  $F_r$ and  $R$  is a retract of $F_r,$  is it followed that the intersection $R \cap H$  is retract of  $H$? Recently this question was answered negatively in  \cite{Retract}.

Thus the concept of verbal closeness is  weaker than algebraic closeness. However in  \cite{MR} the authors proved that any verbally closed subgroup $H$  of a free group   $F_r$  ($r\in \mathbb{N}$) is a retract of $F_r$. It follows that the sets of verbally closed subgroups, algebraically closed subgroups and retracts of $F_r$ coincide.   

In \cite{RomKhis}, a similar statement  was shown for free nilpotent groups.  Let $N_{r,c}$ denote a free nilpotent group of a finite rank $r$ and nilpotency class $c.$ Then the following properties of a subgroup $H\leq N_{r,c}$  are equivalent one to other:  verbal closeness, algebraic closeness, the property ''to be a retract'' and the property ''to be a free factor'' with respect to the variety  $\mathfrak{N}_c$ of all nilpotent groups of nilpotency class $\leq c.$

In the series of papers  \cite{Maz1}, \cite{Maz2}, \cite{Mthesis}, \cite{KlMaz} and \cite{KMM} verbal closeness was studied. The following property was introduced. A group  $H$ is a  {\it strongly verbally closed}  if $H$ is an algebraically closed subgroup in any group containing $H$ as a verbally closed subgroup. The number of results was presented  showing that many groups are strongly verbally closed. 

 Now we introduce the following new notion.
 \begin{definition}
 Let  $G$ be a group.  For any $l\in \mathbb{N},$ a subgroup
$H\leq G$  is said to be  $l$-{\it verbally closed} in $G$ if every system of 
$l$  split equations on 
any $n$ variables
 $$ w_i(x_1, ..., x_n) = h_i, h_i \in H,  i = 1, ..., l,$$
 \noindent
  that has a solution in $G,$ is solvable in $H$.

\end{definition}
In the case $l=1$ this notion means verbal closeness.

\begin{remark}
\label{re:2}
Note that  the verbal closeness does not imply  $l$-verbal closeness of  $H$ in $G$. Indeed, every equation $w(x_1, ..., x_n; H)=h, h\in H,$ is equivalent to a system of split equations with right sides in $H$. This system can be obtained from the initial equation if we change each coefficient $h$ to new variable $x_h$ and add the corresponding split equation $x_h=h$ to the system.  Hence any non-algebraically closed subgroup $H\leq G$ is non-l-verbally closed for some $l.$ We noted  in Remark  \ref{re:1}, that there are verbally closed but non-algebraically closed  subgroups of some groups.
\end{remark}

Further in the paper we denote a conjugate by $g^f=fgf^{-1}$ , and commutator by $[f,g] = fgf^{-1}g^{-1}$. Here  $f, g$ are elements of arbitrary group  $G.$ By $G'$ we denote the derived subgroup (commutant) of $G$. By $\gamma_iG$ we mean the $i$-th member of the low central series in $G$    (in particular, $\gamma_1G=G, \gamma_2G=G'$). By  $G^{(i)}$ we denote the $i$-th member of the derived series in $G$ (in particular,  $G^{(0)}=G,  G^{(1)}=G'$).
By  $\langle g_1, ..., g_k\rangle $ we denote  the subgroup generated by elements $g_1, ..., g_k$ in  $G.$ The variety of all abelian grous is denoted  as $\mathfrak{A}$, the variety of all nilpotent groups of nilpotency class $c$ ($c \in \mathbb{N}$) is denoted as  $\mathfrak{N}_c$. By $\mathfrak{A}^d$ ($d \in \mathbb{N}$) we mean the variety of all solvable groups of the solvability class  $\leq d.$

 For any tuple of varieties  $\mathfrak{B}_i, i = 1, ..., k, \,
\prod_{i=1}^k\mathfrak{B}_i$ denotes their product. We write $F_r(\mathfrak{B})$ to denote a free group of rank  $r$ in $\mathfrak{B}.$ Further in the paper we assume that any rank $r$ is finite. For some relatively free groups we use specific denotions: $A_r=F_r(\mathfrak{A}), \, N_{r,c}=F_r(\mathfrak{N}_{r,c}),\, S_{r,d} = F_r(\mathfrak{A}^d).$

 Let $H$ be a finitely generated subgroup of a group   $G.$
Then $r_{ab}(H)$ denotes a rank, i.e., the minimal number of generators 
of $HG'/G'.$

If $H$  is verbally closed then  $G$ $\cap G' =H'.$  Indeed, if $h\in G'\cap H$, then   some equation of the form $h=\prod_{j=1}^t[x_j, x_j']$ is solvable in  $G$.   Then this equation has a solution in  $H$, i.e., $h\in H'.$ Then  $H/H'
\cong H/H\cap G' \cong HG'/G'.$

The following assertion is followed from the statements above.

 \begin{proposition}
 \label{pr:1}
 {\it Let $G=\langle g_1, ..., g_k, ...\rangle $ be a solvable group and 
$H$ be a subgroup of $G$ such that  $r_{ab}(H) = 0,$ i.e.,  $H\leq G'.$ If  $H$ is verbally closed in  $G,$ then $H=1$. }
\end{proposition}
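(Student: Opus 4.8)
\medskip\noindent\textbf{Proof idea.}
The plan is to show that the hypotheses force $H$ to be simultaneously perfect and solvable, which is possible only for the trivial group. The whole argument rests on the observation recorded just above the statement: a verbally closed subgroup $H\leq G$ satisfies $H\cap G'=H'$.

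First I would recall why that observation holds, since it is the only substantive step. Take $h\in H\cap G'$. As $h$ lies in the derived subgroup of $G$, it can be written as a product $h=\prod_{j=1}^{t}[x_j,x_j']$ of finitely many commutators of elements of $G$; the length $t=t(h)$ may depend on $h$, but that is irrelevant. Consequently the split equation
\[
w(x_1,x_1',\dots,x_t,x_t')\;=\;\prod_{j=1}^{t}[x_j,x_j']\;=\;h,
\]
whose left-hand side involves no constants, is solvable in $G$. By verbal closedness of $H$ it is then solvable in $H$, and any solution exhibits $h$ as a product of commutators of elements of $H$; hence $h\in H'$. Since the inclusion $H'\leq H\cap G'$ is trivial, we obtain $H\cap G'=H'$.

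Now the condition $r_{ab}(H)=0$ means precisely that $HG'/G'$ is trivial, i.e. $H\leq G'$. Combining this with the identity above gives $H=H\cap G'=H'$, so $H$ is perfect. On the other hand $H$, being a subgroup of the solvable group $G$, is itself solvable: if $G^{(d)}=1$ then $H^{(d)}\leq G^{(d)}=1$. But $H=H'$ yields $H=H^{(1)}=H^{(2)}=\dots=H^{(d)}$ by an immediate induction, whence $H=1$.

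I do not expect any real obstacle here: once the identity $H\cap G'=H'$ is available, the conclusion is just the standard fact that a perfect solvable group is trivial. The only point requiring a moment's care is that the word realizing $h$ as a product of commutators is a genuine split equation — it is, since commutators of variables carry no constants — so that verbal closedness (rather than the stronger algebraic closedness) suffices. Note also that $G$ need not be finitely generated for this, since every element of $G'$ is already a product of finitely many commutators.
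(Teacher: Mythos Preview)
Your argument is correct. You invoke the identity $H\cap G'=H'$ (proved in the paragraph preceding the proposition), combine it with $H\leq G'$ to get $H=H'$, and then observe that a perfect subgroup of a solvable group is trivial. This is clean and complete.

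The paper, however, does not carry out the proof this way, despite the sentence ``the following assertion is followed from the statements above.'' Instead it argues directly by contradiction: given $1\neq h\in H$, it locates the index $n\geq 1$ with $h\in G^{(n)}\setminus G^{(n+1)}$, writes $h$ as a product of commutators $[v_i,w_i]$ with $v_i,w_i\in G^{(n-1)}$, and chooses words $v_i(x_1,\dots,x_r),w_i(x_1,\dots,x_r)$ representing these elements that themselves lie in $F_r^{(n-1)}$. The resulting split equation $\prod_i[v_i(\bar{x}),w_i(\bar{x})]=h$ is solvable in $G$ (by $x_j=g_j$) but cannot be solved inside $G'$: substituting elements of $G'=G^{(1)}$ into a word from $F_r^{(n-1)}$ lands in $G^{(n)}$, so each commutator lands in $G^{(n+1)}$, contradicting $h\notin G^{(n+1)}$.

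Your approach is shorter and more conceptual; the paper's approach is more explicit in that it exhibits, for each nontrivial $h\in H$, a concrete split equation witnessing the failure of verbal closedness. Both are valid, and in fact your proof is exactly the one the paper's preamble seems to be pointing at.
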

\begin{proof}
 Suppose that  $h\in H, h \not= 1.$ 
 Let $h\in G^{(n)}\setminus G^{(n+1)}, n\geq 1.$  Then  $h$ can be expressed in the form
$h = \prod_{i=1}^mh_i,$ where every factor $h_i$ is written as
$[v_i, w_i],\,v_i, w_i\in G^{(n-1)}.$  Let $v_i=v_i(g_1, ..., g_r)$ and  $w_i=w_i(g_1, ..., g_r)$ are  the corresponding expressions of the elements  $v_i, w_i$ in generators of  $G$. The words  $v_i(x_1, ..., x_n)$ and  $w_i(x_1, ..., x_n)$  are chosen in  $F_n^{(n-1)}$,  where $F_n$ is the free group with  basis   $\{x_1, ..., x_n\}.$ Hence, the equation  $\prod_{i=1}^m [v_i(x_1,\ldots,x_r),
w_i(x_1,\ldots,x_r)]= h$ has a solution in $G.$  Obviously this equation has no solutions in  $G'$ because any substitution of elements from $G'$ instead of variables gives  an element in  $G^{(n+1)}.$
Especially, this equation is not solvable in  $H.$ 

\end{proof}

The following statements are main results of this paper.

\begin{theorem}
\label{th:1}   Let $H$ be a finitely generated verbally closed subgroup of a free solvable group   $S_{r,d}$.  If $r_{ab}(H) = 1,$ then $H$ is a retract.
\end{theorem}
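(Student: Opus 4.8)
The plan is to reduce the claim to producing a retraction $\rho\colon S_{r,d}\to H$ that is the identity on $H$. Since $r_{ab}(H)=1$, the image of $H$ in the abelianization $S_{r,d}/S_{r,d}'\cong \mathbb{Z}^r$ is a rank-one subgroup; pick $h_1\in H$ whose image generates $HS_{r,d}'/S_{r,d}'$ modulo torsion (there is no torsion here), and (using that $H$ is verbally closed, hence $H\cap S_{r,d}'=H'$, as established in the excerpt) conclude that $H=\langle h_1\rangle H'$ and that $H/H'$ is infinite cyclic. So $H$ is a finitely generated metabelian-by-(solvable) group whose abelianization is $\mathbb{Z}$; I would first pin down its structure and in particular write $h_1$ in terms of a basis $\{x_1,\dots,x_r\}$ of $S_{r,d}$. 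After composing with an automorphism of $S_{r,d}$, I may assume the image of $h_1$ in $\mathbb{Z}^r$ is $(1,0,\dots,0)$, i.e.\ $h_1=x_1 u$ with $u\in S_{r,d}'$.

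**The key verbal equation.**

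The heart of the argument is to use verbal closeness with a cleverly chosen word $w$ so that solvability in $S_{r,d}$ forces $H$ to ``look like'' a retract. The natural candidate generalizes the free-group argument of \cite{MR} and the nilpotent argument of \cite{RomKhis}: I want to find generators $h_1,\dots,h_m$ of $H$ and a word $W(x_1,\dots,x_n)$ together with a tuple in $S_{r,d}$ realizing the $h_i$'s ``in the right coordinates'' so that applying the natural retraction-candidate $x_1\mapsto h_1$, $x_j\mapsto 1$ ($j\ge 2$) of $S_{r,d}$ onto $\langle h_1\rangle$ extends over $H$. Concretely: one shows that the endomorphism $\rho_0$ of $S_{r,d}$ sending $x_1\mapsto h_1$ and $x_j\mapsto 1$ for $j\ge 2$ is a retraction onto $C=\langle h_1\rangle$, and the real task is to show $\rho_0$ restricted to $H$ is injective (equivalently an automorphism of $H$), which combined with Hopficity of finitely generated solvable groups gives that $\rho_0|_H$ is the identity after adjusting by an automorphism of $H$ — hence $H=C$ is a retract, OR, if $\rho_0|_H$ is not injective, one must instead build a different retraction. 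I expect that the correct move is: since $H/H'\cong\mathbb{Z}$ and $H$ is f.g.\ solvable, $H'$ is finitely generated as a normal subgroup (it is f.g.\ as a module over $\mathbb{Z}[t,t^{-1}]$-type ring in the metabelian case; in general one uses that $S_{r,d}'$ is finitely generated over the appropriate group ring after restriction), so choosing $h_2,\dots,h_m$ generating $H'$ normally and using $H\cap S_{r,d}'=H'$, one writes down a finite system of split equations whose $S_{r,d}$-solvability is witnessed by the basis elements and whose $H$-solvability yields the desired retraction.

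**Carrying out the main step via the module structure.**

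More precisely I would argue: because $H$ is verbally closed, for any word identity we can transport solutions; the plan is to pick the word $w(x,y_1,\dots,y_s)$ expressing a generic element of $\langle x\rangle\cdot(\text{normal closure of }x'\text{s in the }H'\text{-part})$ and observe that the equation ``$w=h_1$'' (together with auxiliary equations presenting the commutator structure, which are available because $H\cap S_{r,d}'=H'$) is solved in $S_{r,d}$ by $x\mapsto x_1,\ y_i\mapsto$ (suitable words in the $x_j$, $j\ge2$), so by verbal closeness it is solved inside $H$, yielding an element $\tilde h_1\in H$ and elements of $H'$ that generate $H$ and that are the images of $x_1,\dots,x_r$ under an endomorphism of $S_{r,d}$. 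Setting $\rho$ to send $x_1\mapsto\tilde h_1$ and the generators of $S_{r,d}'$ appropriately into $H'$ gives the retraction after checking $\rho|_H=\mathrm{id}$ by the Hopf property of the finitely generated solvable group $H$ (finitely generated solvable groups need not be Hopfian in general, so here I would instead verify directly that $\rho|_H$ is surjective and that the abelianization map is the identity, then use residual finiteness or the fact that $H$, being a subgroup of $S_{r,d}$, is residually-(finite solvable)? — no: instead use that $\rho|_H$ fixes $h_1$ and is surjective with nilpotent-by-abelian... ).

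**The main obstacle.**

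The hard part will be exactly this last injectivity/identity check: making $\rho|_H$ the identity rather than merely a surjective endomorphism, and controlling the $H'$-part of the retraction so that $H'$ lands inside itself. In the free-group case Hopficity settles it instantly; in the free solvable case $H$ need not be Hopfian, so one must instead engineer the equations so that $\rho$ fixes a generating set of $H$ on the nose. I would handle this by working in the quotient $S_{r,d}/S_{r,d}^{(d)}=S_{r,d-1}$ first (induction on $d$, with the metabelian case $d=2$ as the base, where Magnus embedding and the module structure of $S_{r,2}'$ over $\mathbb{Z}[x_1^{\pm1},\dots,x_r^{\pm1}]$ make everything explicit), lift the retraction, and patch the top layer $S_{r,d}^{(d)}$ using that it is a module over the group ring of the torsion-free solvable group $S_{r,d-1}$; verbal closeness is applied once more to get the needed solution in this top module. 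The base case $d=2$ and the lifting step are where the genuine work lies.
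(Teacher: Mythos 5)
Your text is a strategy sketch rather than a proof: the decisive steps are explicitly left open (you yourself write that ``the base case $d=2$ and the lifting step are where the genuine work lies''), and the step you identify as the main obstacle --- forcing the candidate retraction to be the identity on $H$, not merely a surjection, in the absence of Hopficity for finitely generated solvable groups --- is never resolved. There is also an early unjustified reduction: to write $h_1=x_1u$ with $u\in S_{r,d}'$ you need the image of $H$ in the abelianization $\mathbb{Z}^r$ to be \emph{primitive} (a direct factor), not merely of rank $1$; this requires the fact that the image of a verbally closed subgroup in $G/G'$ is verbally closed and hence a direct factor (Lemma \ref{le:3} of the paper), which you do not invoke.

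More importantly, your plan aims at the wrong target and therefore meets difficulties that the actual situation does not present. The paper's proof shows that under the hypothesis $r_{ab}(H)=1$ the subgroup $H$ is in fact \emph{cyclic}: by induction on $d$ (via Lemma \ref{le:3}) one arranges a basis $\{z_1,\dots,z_r\}$ with $H=\langle z_1c_1,c_2,\dots,c_m\rangle$ and $c_1,\dots,c_m\in G^{(d-1)}$; then verbal closeness applied to the split equations $c_i(x_1,\dots,x_r)=c_i$ gives relations $c_i=c_1^{-\delta_{i1}}\cdots c_m^{-\delta_{im}}$ with exponents in $\mathbb{Z}[a_1^{\pm1}]$ congruent to the identity matrix modulo the ideal $(a_1-1)$, and applying Fox derivatives one gets a linear system whose determinant is a unit modulo $(a_1-1)$, hence nonzero; since $G^{(d-1)}$ is a torsion-free module this forces all $c_i=1$, so $H=\langle z_1\rangle$ and the retraction is the obvious one (Lemma \ref{le:5}). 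Your proposal instead assumes $H'$ may be nontrivial and tries to build a retraction onto a possibly noncyclic $H$ by lifting through the derived series and controlling $H'$ as a module --- precisely the part you could not complete, and which the determinant/Fox-derivative argument makes unnecessary. As it stands the proposal has a genuine gap and does not constitute a proof.
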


\begin{theorem}
\label{th:2} Let $H$ be a finitely generated verbally closed subgroup of a free solvable group 
 $S_{r,d}$.  If
$r_{ab}(H) = r, $ then $H=S_{r,d}.$
\end{theorem}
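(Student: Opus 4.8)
\medskip

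\noindent\textbf{Proposed proof.} Write $S=S_{r,d}$ and let $\pi\colon S\to S/S'\cong\mathbb Z^{r}$ be the abelianization. First I extract from verbal closedness the fact that $H\cap S^{(i)}=H^{(i)}$ for every $i$: if $h\in H\cap S^{(i)}$ then, by the definition of the derived series, $h$ is the value of some iterated commutator word $w(x_{1},\dots ,x_{m})$ of depth $i$ (the number $m$ of variables depending on $h$, but finite) at elements of $S$, so the split equation $w=h$ is solvable in $S$, hence — by verbal closedness — in $H$, and any $H$-solution exhibits $h\in H^{(i)}$. In particular $H\cap S'=H'$, so $\pi$ maps $H/H'$ isomorphically onto $\pi(H)$, and $r_{ab}(H)=r$ says that $\pi(H)$ has finite index, say $m$, in $\mathbb Z^{r}$. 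I upgrade this to $\pi(H)=\mathbb Z^{r}$: for each $i$ pick $h_{i}^{*}\in H$ with $\pi(h_{i}^{*})=m\,\pi(s_{i})$, write $h_{i}^{*}=s_{i}^{m}c_{i}$ with $c_{i}\in S'$ a product of $k=k(i)$ commutators of elements of $S$, and apply verbal closedness to the split equation $x^{m}[y_{1},y_{2}]\cdots[y_{2k-1},y_{2k}]=h_{i}^{*}$, which $x=s_{i}$ and the witnesses for $c_{i}$ solve in $S$; applying $\pi$ to an $H$-solution gives $m\,\pi(s_{i})=m\,\pi(g)$ with $g\in H$, whence $\pi(s_{i})=\pi(g)\in\pi(H)$ because $\mathbb Z^{r}$ is torsion free.

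Now I argue by induction on $d$; the case $d=1$ is immediate since there $S=\mathbb Z^{r}=\pi(H)$. For $d\ge 2$, put $\bar S=S/S^{(d-1)}\cong S_{r,d-1}$ and let $\bar H$ be the image of $H$ in $\bar S$. I claim $\bar H$ is verbally closed in $\bar S$. Suppose a split equation $w(x_{1},\dots ,x_{n})=\bar h$, with $\bar h$ the image of some $h\in H$, is solvable in $\bar S$; lifting a solution to $g_{1},\dots ,g_{n}\in S$ we obtain $w(g_{1},\dots ,g_{n})=hz$ with $z\in S^{(d-1)}$. Write $z^{-1}$ as the value of a depth-$(d-1)$ iterated commutator word $u$ and apply verbal closedness of $H$ to the split equation $w(x_{1},\dots ,x_{n})\cdot u(\vec y)=h$, which the $g_{i}$ together with the witnesses for $z^{-1}$ solve in $S$. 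In the resulting $H$-solution the $u$-part lies in $H^{(d-1)}\subseteq S^{(d-1)}=\ker(S\to\bar S)$, so projecting down yields a solution of $w=\bar h$ in $\bar H$. Since $r_{ab}(\bar H)=r$ as well, the induction hypothesis gives $\bar H=\bar S$, i.e. $H\cdot S^{(d-1)}=S$.

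It remains to deduce $H=S$ from $H\cdot S^{(d-1)}=S$, and this is the heart of the matter, for $H\cdot S^{(d-1)}=S$ by itself is not enough — already $H=\langle s_{1},\,s_{2}[s_{1},s_{2}]\rangle\le S_{2,2}$ satisfies $HS'=S$ yet is a proper subgroup. One must bring in verbal closedness a final time, through the module structure of the derived subgroups. In the essential case $d=2$: $A:=S'$ is an abelian normal subgroup of $S$, finitely generated as a $\mathbb Z[\mathbb Z^{r}]$-module (the action factoring through $\pi$), $H\cap A=H'$ is a submodule, and $HS'=S$ forces $S'=H'+\mathfrak aS'$, where $\mathfrak a$ denotes the augmentation ideal; hence $M:=S'/H'$ is a finitely generated module with $\mathfrak aM=M$. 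A last application of verbal closedness — on split equations whose variables are specialised to iterated commutators of the free generators, in the spirit of the proof of Theorem~\ref{th:1} — together with a Nakayama/Krull-intersection argument over $\mathbb Z[\mathbb Z^{r}]$ (localising at the maximal ideals containing $\mathfrak a$ and at the height-one primes $(t_{j}-1)$, and using that $A$ is torsion free as a module) then forces $M=0$; so $S'=H'\subseteq H$, and therefore $H=HS'=S$. For $d\ge 3$ one iterates this through the layers $S^{(i)}/S^{(i+1)}$, the delicate bookkeeping over the non-commutative group rings $\mathbb Z[S_{r,i}]$ being the main technical burden; this final step is the one I expect to be the principal obstacle.
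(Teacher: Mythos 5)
Your reduction steps are sound and essentially follow the paper's route: the identity $H\cap S^{(i)}=H^{(i)}$, the promotion of $r_{ab}(H)=r$ to $\pi(H)=\mathbb Z^{r}$, and the passage to $\bar S=S/S^{(d-1)}$ with $\bar H$ verbally closed (this is Lemma \ref{le:3}) together with induction on $d$ give exactly the paper's starting configuration $H=\langle z_1c_1,\ldots,z_rc_r,c_{r+1},\ldots,c_m\rangle$ with $c_i\in C=S^{(d-1)}$, i.e. $HC=S$. But the step you yourself flag as ``the principal obstacle'' --- deducing $C\subseteq H$ from $HC=S$ plus verbal closedness --- is precisely where the entire substance of the theorem lies, and your sketch of it does not work as stated. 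The Nakayama-type observation $\mathfrak a M=M$ for $M=S'/H'$ cannot force $M=0$ over $\mathbb Z[\mathbb Z^{r}]$, since $\mathfrak a$ is not in the Jacobson radical; indeed your own example $H=\langle s_1,\,s_2[s_1,s_2]\rangle$ satisfies $\mathfrak a M=M$ with $M\neq 0$, so no amount of localisation or Krull-intersection bookkeeping can close the gap without a concrete new input from verbal closedness, and ``split equations specialised to iterated commutators, in the spirit of Theorem \ref{th:1}'' is a gesture, not an argument (the determinant trick of Theorem \ref{th:1} only controls the finitely many correction terms $c_i$ over the one-variable ring $\mathbb Z[a_1^{\pm1}]$; here one must capture an arbitrary $c\in C$).

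What the paper actually does at this point is two-layered, and neither layer appears in your proposal. First, Lemma \ref{le:4}: for a verbally closed $H$ in the given position, $H^{(d-1)}$ is $\mathbb Z[H/H^{(d-1)}]$-closed in $G^{(d-1)}$, i.e. $c^{\alpha}\in H^{(d-1)}$ with $0\neq\alpha$ already forces $c\in H^{(d-1)}$. This is proved by writing down a specific split equation with module exponents of the form $(1-\overline{x}_i^{m_i})(1-\overline{x}_i^{-m_i})\alpha(\overline{x}_1,\ldots,\overline{x}_r)$, taking an $H$-solution, applying Fox derivatives, comparing norms in powers of the augmentation ideal to rule out solutions with $h_i\in G'$, and invoking the result of \cite{Timsolv} that $(1-s_1^{m})\alpha=(1-y_1^{m})\beta$ with $m$ large forces $y_1=s_1^{\pm1}$. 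Second, in the proof of Theorem \ref{th:2} itself, the Magnus embedding realises $C$ inside a free $\mathbb Z[A]$-module of rank $r$, the Ore condition embeds $\mathbb Z[A]$ in a skew field $P$, and a dimension count ($\dim_P C^{P}=r-1$) together with Baumslag's theorem shows that the $r-1$ elements $w_i(z_1c_1,z_ic_i)\in H^{(d-1)}$ are $P$-independent, so every $c\in C$ satisfies $c^{\sigma}\in H^{(d-1)}$ for some $0\neq\sigma\in\mathbb Z[A]$; Lemma \ref{le:4} then gives $c\in H^{(d-1)}$, whence $C\subseteq H$ and $H=S$. Note also that the paper runs this argument for general $d$ in one induction, using the noncommutative ring $\mathbb Z[S_{r,d-1}]$ directly; your plan to treat $d=2$ and then ``iterate through the layers'' would additionally have to deal with the fact that the relevant coefficient rings are noncommutative, which the Ore/skew-field mechanism is exactly designed to handle. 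As submitted, the proposal proves only the reduction and leaves the theorem's core unproved.
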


\begin{theorem}
\label{th:3} Every $2$-generated  verbally closed subgroup $H$ of a free solvable group 
    $S_{r,d}$ is a retract.
\end{theorem}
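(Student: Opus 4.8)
The plan is to argue by cases on $r_{ab}(H)$, which lies in $\{0,1,2\}$ since $H$ is $2$-generated. If $r_{ab}(H)=0$ then $H\leq S_{r,d}'$, so $H=1$ by Proposition~\ref{pr:1}, a retract. If $r_{ab}(H)=1$, then $H$ is a retract by Theorem~\ref{th:1}. If $r_{ab}(H)=2$ and $r=2$, then $r_{ab}(H)=r$ and Theorem~\ref{th:2} gives $H=S_{2,d}$. Hence the only case with new content is $r_{ab}(H)=2$ together with $r\geq 3$, which I treat below; fix generators $h_1,h_2$ of $H$. As recorded just before Proposition~\ref{pr:1}, verbal closedness gives $H\cap S_{r,d}'=H'$, so $H/H'\cong HS_{r,d}'/S_{r,d}'$ is free abelian of rank $2$, and $\bar h_1,\bar h_2$ form a basis of the rank-$2$ subgroup $L:=HS_{r,d}'/S_{r,d}'$ of $S_{r,d}/S_{r,d}'\cong\mathbb{Z}^r$.

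First I would show that $L$ is a direct summand of $\mathbb{Z}^r$. If not, pick a prime $p$ and $v\in\mathbb{Z}^r\setminus L$ with $pv\in L$, then $h\in H$ with $\bar h=pv$ and $g\in S_{r,d}$ with $\bar g=v$. Since $hg^{-p}\in S_{r,d}'$ is a product of commutators, conjugating those commutators by $g^{p}$ gives $h=g^{p}\prod_{i=1}^{m}[a_i,b_i]$ for suitable $a_i,b_i\in S_{r,d}$. Thus the split equation $x^{p}\prod_{i=1}^{m}[z_i,z_i']=h$ is solvable in $S_{r,d}$; by verbal closedness it is solvable in $H$, and abelianising a solution gives $p\bar h_0=pv$, hence $\bar h_0=v\in L$, a contradiction. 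So $\mathbb{Z}^r=L\oplus M$. Since every matrix of $\mathrm{GL}_r(\mathbb{Z})$ is induced by a tame automorphism of $S_{r,d}$, and automorphisms preserve both verbal closedness and the property of being a retract, after applying a suitable automorphism I may assume that $S_{r,d}$ has a free basis $y_1,\dots,y_r$ with $h_1=y_1c_1$ and $h_2=y_2c_2$ for some $c_1,c_2\in S_{r,d}'$.

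The remaining and decisive step is to place $H$ inside a rank-$2$ retract of $S_{r,d}$. Consider the endomorphism $\psi$ of $S_{r,d}$ determined by $\psi(y_1)=h_1$, $\psi(y_2)=h_2$ and $\psi(y_j)=y_j$ for $3\leq j\leq r$. It induces the identity on $S_{r,d}/S_{r,d}'$, and $S_{r,d}$ is Hopfian (free solvable groups are residually finite), so $\psi$ is an automorphism as soon as it is surjective, i.e. as soon as $S_{r,d}=\langle h_1,h_2,y_3,\dots,y_r\rangle$. Granting this, $H=\psi(\langle y_1,y_2\rangle)$ is the image under an automorphism of the retract $\langle y_1,y_2\rangle\cong S_{2,d}$ of $S_{r,d}$, hence $H$ is a retract. (Equivalently: if $H$ lies in a subgroup $K\cong S_{2,d}$ that is a retract of $S_{r,d}$, then $H$ is verbally closed in $K$, since verbal closedness passes to intermediate subgroups, and as $H$ has two generators with independent images in $K/K'\cong\mathbb{Z}^2$, Theorem~\ref{th:2} applied inside $K$ gives $H=K$. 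In the special case $c_1,c_2\in\langle y_1,y_2\rangle$ one may simply take $K=\langle y_1,y_2\rangle$.)

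The hard part is exactly the surjectivity $S_{r,d}=\langle h_1,h_2,y_3,\dots,y_r\rangle$: a priori $\psi$ is only an IA-endomorphism, and for a generic pair $c_1,c_2\in S_{r,d}'$ its image is proper, so verbal closedness must be used essentially to exclude such pairs. I would argue by induction on the solvable length $d$ (the base case $d=1$ being the direct-summand statement above). For the inductive step, put $N=\langle h_1,h_2,y_3,\dots,y_r\rangle$; then $NS_{r,d}'=S_{r,d}$, so $N=S_{r,d}$ iff $S_{r,d}'\subseteq N$. Analysing $N\cap S_{r,d}'$ through the Magnus embedding of $S_{r,d}'$ and the conjugation action of $S_{r,d}/S_{r,d}'$ — modulo which $N\cap S_{r,d}'$ is controlled by the commutators $[h_1,y_j]$, $[h_2,y_j]$ $(j\geq 3)$, $[h_1,h_2]$ and $[y_i,y_j]$ $(i,j\geq 3)$ — one aims to show that if $S_{r,d}'\not\subseteq N$ then a "missing" element yields, via a divisibility inside $S_{r,d}'$ realisable over $S_{r,d}$ but not over $H$, a split equation of the commutator-plus-power type used above that is solvable in $S_{r,d}$ but not in $H$, contradicting verbal closedness. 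Making this module computation precise, together with the passage between successive derived quotients, is the technical core of the proof.
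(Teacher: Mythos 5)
Your reductions are fine: the case split on $r_{ab}(H)\in\{0,1,2\}$ via Proposition~\ref{pr:1}, Theorem~\ref{th:1} and (for $r=2$) Theorem~\ref{th:2}, the direct-summand argument (which re-proves Lemma~\ref{le:3}), the normalization $h_1=y_1c_1$, $h_2=y_2c_2$ with $c_i\in S_{r,d}'$, and the remark that verbal closedness passes to intermediate subgroups. The gap is exactly the step you defer as ``the technical core'': the surjectivity $S_{r,d}=\langle h_1,h_2,y_3,\dots,y_r\rangle$. This is not just unproven --- it is false in general, because it would make $H$ a free factor of $S_{r,d}$ in the variety $\mathfrak{A}^d$, which is strictly stronger than being a retract. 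Concretely, take $d=2$, $r=3$: in $M_3$ with basis $z_1,z_2,z_3$ put $h_1=z_1[z_1,z_3]$, $h_2=z_2$, so $c_1=[z_1,z_3]$, $c_2=1$ are already in your normal form. The map $z_1\mapsto h_1$, $z_2\mapsto z_2$, $z_3\mapsto 1$ is a retraction onto $H=\langle h_1,h_2\rangle$, so $H$ is a retract, hence verbally closed, with $r_{ab}(H)=2$. Yet your $\psi$ ($z_1\mapsto h_1$, $z_2\mapsto z_2$, $z_3\mapsto z_3$) is not surjective: its Fox Jacobian has determinant $1+a_1-a_1a_3$, not a unit of $\mathbb{Z}[A_3]$, while any automorphism of $M_3$ has invertible Jacobian (chain rule); by Hopficity $\psi$ is therefore not onto. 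Moreover the $2\times 2$ minors of the Jacobian of $(h_1,h_2)$ generate the ideal $(1+a_1-a_1a_3,\,a_1-1)$, which is proper (reduce modulo $a_1-1$ to get $(2-a_3)$), and a chain-rule/Cauchy--Binet argument shows this ideal is unchanged under replacing $h_1,h_2$ by any other basis of $H$; hence no renormalization makes $\{h_1,h_2\}$ extend to a basis of $M_3$ at all. So verbal closedness cannot ``exclude such pairs $c_1,c_2$'', and the module computation you hope for does not exist. The parenthetical fallback (embed $H$ in a rank-two retract $K\cong S_{2,d}$ and apply Theorem~\ref{th:2} inside $K$) is logically sound but circular: producing such a $K$ is precisely the content of the theorem (in the example above $K$ would have to be $H$ itself), and you construct it only in the trivial case $c_1,c_2\in\langle y_1,y_2\rangle$.

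For comparison, the paper's proof never claims any free-factor or primitivity property of $H$ itself. Lemma~\ref{le:6} (Fox derivatives, absence of module torsion in $G^{(d-1)}$, and Baumslag's theorem) shows that a non-cyclic $2$-generated verbally closed subgroup is itself free solvable of rank $2$ with direct-summand abelianization; Lemma~\ref{le:1} then provides a strong test element $t$ of $H$, written as a word $t(z_1,\dots,z_r)$ in a basis of $S_{r,d}$; verbal closedness applied to the single split equation $t(x_1,\dots,x_r)=t$ yields a solution $h_1,\dots,h_r\in H$; and Lemma~\ref{le:2} turns the endomorphism $z_j\mapsto h_j$ into a retraction onto $H$ after composing with an inner automorphism (the cyclic and trivial cases are covered by Lemma~\ref{le:5} and Proposition~\ref{pr:1}). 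Any repair of your approach would have to abandon the surjectivity goal and instead produce a retraction directly, e.g.\ by such a test-element mechanism.
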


\begin{theorem}
\label{th:4} For any $l\in \mathbb{N},$ every finitely generated 
$l$-verbally closed subgroup  $H$ of a free metabelian group 
  $M_r$ such that $r_{ab}(H)=l+1$ is a retract.
\end{theorem}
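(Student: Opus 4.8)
The plan is to exhibit an explicit retraction $\rho\colon M_r\to H$. Write $M=M_r$, identify $M/M'$ with $\mathbb{Z}^r$, and set $\bar H=HM'/M'$; by hypothesis $\bar H$ is free abelian of rank $l+1$. Since an $l$-verbally closed subgroup is in particular verbally closed, the facts established just before Proposition \ref{pr:1} apply: $H\cap M'=H'$ and $H/H'\cong\bar H\cong\mathbb{Z}^{l+1}$. I would first record two elementary consequences of verbal closedness. First, $H$ is \emph{isolated} in $M$: if $g\in M$ and $g^{k}\in H$, then the split equation $x^{k}=g^{k}$ is solvable in $H$, and since roots are unique in free solvable groups this forces $g\in H$. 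Second, the Magnus embedding identifies $M'$ with a submodule of the free $\mathbb{Z}[\mathbb{Z}^{r}]$-module of rank $r$ (the image of each $[e_i,e_j]$ being its tuple of Fox derivatives), and it turns a system of split equations in $M$ into a linear system over the commutative Noetherian ring $\mathbb{Z}[\mathbb{Z}^{r}]$, respectively over $\mathbb{Z}[\bar H]=\mathbb{Z}[\mathbb{Z}^{l+1}]$ for solutions sought inside $H$. This translation is the tool by which $l$-verbal closedness is converted into module-theoretic information.

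\emph{Step 1 (normalisation).} I would show $\bar H$ is a direct summand of $\mathbb{Z}^{r}$. If it were not, $\mathbb{Z}^{r}/\bar H$ would have nontrivial torsion; choosing $h_{0},\dots,h_{l}\in H$ whose images form a basis of $\bar H$ and translating through the Magnus embedding, this torsion produces a system of $l$ split equations with right-hand sides in $H$ that is solvable in $M$ but, by the choice of the $h_i$, not in $H$, contradicting $l$-verbal closedness. (Isolation of $H$ already disposes of rank-one divisibility; the remaining work is genuinely module-theoretic, and this is the point where the numerology "$l$ equations for a sublattice of rank $l+1$" enters.) Once $\bar H$ is a direct summand I would apply an automorphism of $M$ — which preserves $l$-verbal closedness, the property of being a retract, and $r_{ab}$ — to arrange $\bar H=\langle\bar e_{1},\dots,\bar e_{l+1}\rangle$ for a basis $e_{1},\dots,e_{r}$ of $M$.

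\emph{Step 2 (the retraction).} Let $\pi\colon M\to M_{l+1}:=\langle e_{1},\dots,e_{l+1}\rangle$ be the standard retraction ($e_{i}\mapsto e_{i}$ for $i\le l+1$, $e_{j}\mapsto 1$ for $j>l+1$) and $N=\ker\pi$ the normal closure of $e_{l+2},\dots,e_{r}$. The claim to be proved is that there exist $f_{1},\dots,f_{l+1}\in H$ with $\pi(f_{i})=e_{i}$ and with $H\cap N=1$ (note $H\cap N\subseteq M'$ automatically, since $\bar H\cap\langle\bar e_{l+2},\dots,\bar e_{r}\rangle=0$). Granting this, $\langle f_{1},\dots,f_{l+1}\rangle$ is an $(l+1)$-generated metabelian group carried onto the free metabelian group $M_{l+1}$ by $\pi$, hence $\langle f_{1},\dots,f_{l+1}\rangle\cong M_{l+1}$ with $\pi$ an isomorphism on it; the condition $H\cap N=1$ then forces $H=\langle f_{1},\dots,f_{l+1}\rangle$; and $\rho\colon e_{i}\mapsto f_{i}\ (i\le l+1),\ e_{j}\mapsto 1\ (j>l+1)$ is a well-defined endomorphism of $M$ with image $H$ which kills $N$ and therefore fixes every $f_{i}=e_{i}\cdot(e_{i}^{-1}f_{i})$ with $e_{i}^{-1}f_{i}\in N$, so $\rho^{2}=\rho$ and $\rho$ is a retraction of $M$ onto $H$. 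To produce the $f_{i}$ and to rule out $H\cap N\ne 1$ I would again pass through the Magnus embedding: one writes down a system of $l$ split equations in $l+1$ unknowns $x_{0},\dots,x_{l}$, with words chosen so that an $M$-solution is a standard basis of a free metabelian retract, and — using that $r_{ab}(H)=l+1$ matches the number of unknowns — so that every $H$-solution $(f_{0},\dots,f_{l})$ still generates such a retract; the equations simultaneously rigidify the tuple relative to the reference element $x_{0}$ and, via the associated linear system over $\mathbb{Z}[\mathbb{Z}^{l+1}]$, force $M_{l+1}'\subseteq\pi(H)$ and $H'\cap N=0$.

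The main obstacle I anticipate is precisely this last construction: engineering a system of \emph{only} $l$ split equations whose solvability in $M$ is transparent and whose solvability in $H$ delivers a complete generating $(l+1)$-tuple of $H$ of the required form. Concretely this amounts to carrying out the linear algebra over $\mathbb{Z}[\mathbb{Z}^{l+1}]$ for the submodules $H'$ and $N\cap M'$ of $M'$, using their finite generation (Noetherianity of $\mathbb{Z}[\mathbb{Z}^{l+1}]$) together with the hypothesis $r_{ab}(H)=l+1$ to keep the number of equations down to $l$; the special case $l=1$ of this already yields, in the metabelian setting, the $r_{ab}(H)=2$ analogue of Theorems \ref{th:1} and \ref{th:3}.
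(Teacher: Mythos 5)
Your overall frame (normalise the basis so that $h_i\equiv z_i\pmod{M_r'}$ for $i=1,\dots,l+1$, then manufacture from $l$-verbal closedness a map $M_r\to H$ fixing $H$) is the same skeleton as the paper's argument, and your Step~1 is essentially Lemma~\ref{le:3} plus Baumslag's theorem, which is fine. But the proof has a genuine gap, and it is exactly the step you yourself flag as ``the main obstacle I anticipate'': you never produce the system of $l$ split equations, nor the rigidity statement about its solutions in $H$, and without that there is no argument. In the paper this is the entire content of the proof: one takes the concrete system $[h_i(x_1,\dots,x_r),h_1(x_1,\dots,x_r),h_1(x_1,\dots,x_r),h_i(x_1,\dots,x_r)]=[h_i,h_1,h_1,h_i]$, $i=2,\dots,l+1$ (one reference element $h_1$, hence exactly $l$ equations), and proves a rigidity lemma generalizing Lemma~\ref{le:7}: working in $G/\gamma_5G$ and comparing exponents of the basic commutators of weight $4$, any $H$-solution forces $h_i(f_1,\dots,f_r)\equiv h_i^{\pm1}\pmod{G'}$ with no mixing of the $h_j$'s. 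Even then one is not done: the induced endomorphism $\nu\colon z_j\mapsto f_j$ need not fix $H_{l+1}$, and the paper has to pass to $\eta=\nu^2$, extract from the equalities $[w_i,w_1,w_1,w_i]=[h_i,h_1,h_1,h_i]$ the module relations $c_1^{1-a_i}=c_i^{1-a_1}$ giving a single $d\in H'$ with $c_i=d^{1-a_i}$, and correct $\eta$ by conjugation by $d^{-1}$ to obtain $\psi$ identical on $H_{l+1}$. None of these steps (the choice of words, the weight-$4$ commutator computation, the squaring-and-conjugation correction) appears in your proposal, and they are not routine consequences of the Magnus embedding ``linear algebra'' you invoke.

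A second, related problem is that your target statement is stronger than what you can assume: you want $f_1,\dots,f_{l+1}\in H$ with $\pi(f_i)=e_i$ and $H\cap N=1$, i.e.\ $H\cong M_{l+1}$ generated by a lift of a basis, and you deduce $H=\langle f_1,\dots,f_{l+1}\rangle$ from it. But a priori $H$ may have additional generators $h_{l+2},\dots,h_{l+t}$ lying in $M_r'$, and neither verbal nor $l$-verbal closedness gives you $\pi(H)=M_{l+1}$ or $H\cap N=1$ up front; establishing that would essentially be the theorem itself. The paper avoids this by not claiming it: it keeps the extra generators, builds $\psi$ identical on $H_{l+1}$, and then shows separately (solving $h(x_1,\dots,x_r)=h$ in $H$, a determinant argument over $\mathbb{Z}[A_{l+1}]$ whose entries lie in the fundamental ideal, and the absence of module torsion in $M_r'$) that $\psi$ also fixes each $h_{l+k}$. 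So the proposal is a reasonable plan whose decisive constructions are missing, not a proof.
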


\bigskip

\begin{center}

{\bf \S1. PRELIMINARY RESULTS}

\end{center}

\bigskip

Recall that a set of elements $\{g_1,\ldots,g_n\}$ of a group $G$
is  {\it test set} if any endomorphism  $\varphi : G
\rightarrow G,$ that fixes every element  $g_i\  (i=1, ..., n),$  is an automorphism of 
 $G.$ An element  $g \in G$ is said to be  {\it
test element} if  $\{g\}$ is test set. Test rank $tr(G)$ is defined as the minimal number of elements in a test set of $G.$ See \cite{Timmet}, \cite{Romtest}, \cite{Timsolv}, \cite{Timpol}, \cite{Timbook} for details and results about test sets and test ranks of groups.

Let $F_r$ be a free group of rank  $r$ and let $V$ be a normal subgroup of $F_r.$
A test set    $\{g_1,\ldots, g_n\}\subseteq G=F_r/V'$
is said to be  {\it strongly test set} if any endomorphism 
$\varphi : G \rightarrow G$ fixing every element $g_i$ is inner automorphism that  acts as  conjugate by element from 
$V/V'.$

Recall, that a variety of the form $\mathfrak N_{c_1} \ldots
\mathfrak N_{c_m}, m \geq 1$ is called  {\it polynilpotent of class
 } $\bar{c}=(c_1, ..., c_m).$ We consider a polynilpotent variety of the form 
  P$\mathfrak{N}_{\bar{c}}=\mathfrak A \mathfrak
N_{c_1} \ldots \mathfrak N_{c_m}, \,m \geq 1,$ of class $\bar{c}=(2,
c_1, ..., c_m)$. Note that class $\bar{c}=(2, 2, ..., 2)$
corresponds to  $\frak{A}^{m+1}.$  The following lemma follows 
from results and their proofs of the papers (\cite{Timsolv},
\cite{Timpol}) by C.K. Gupta and the second author. 
\begin{lemma}
\label{le:1}
A relatively free group  $F_{r}($P$\mathfrak{N}_{\bar{c}})$ of rank $r \geq 2$ of the variety
P$\mathfrak{N}_{\bar{c}}, \,m \geq
1,$ admits a strongly test set consisting of $(r-1)$ elements.
\end{lemma}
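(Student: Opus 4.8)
The plan is to take as the required set the $(r-1)$-element test set of $G:=F_r(\mathrm{P}\mathfrak{N}_{\bar c})$ that C.\,K.~Gupta and the second author construct in \cite{Timsolv,Timpol}, and to sharpen the argument given there so as to get the word ``strongly''. Write $G=F_r/V'$, where $V\trianglelefteq F_r$ is the verbal subgroup with $\bar G:=F_r/V=F_r(\mathfrak{N}_{c_1}\cdots\mathfrak{N}_{c_m})$; then $A:=V/V'$ is a non-trivial abelian normal subgroup of $G$ with $G/A\cong\bar G$, and $A$ is a right $\mathbb Z[\bar G]$-module under conjugation. Through the Magnus embedding one identifies $A$ with a submodule of the free $\mathbb Z[\bar G]$-module of rank $r$, and to an endomorphism $\varphi$ of $G$ one attaches its Fox--Jacobian matrix $J(\varphi)=\bigl(\overline{\partial\varphi(x_i)/\partial x_j}\bigr)_{i,j}\in M_r(\mathbb Z[\bar G])$. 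The ingredients to be used, all of them either classical or available from \cite{Timsolv,Timpol}, are: an endomorphism $\varphi$ is recovered from $J(\varphi)$; $\varphi\in\mathrm{Aut}(G)$ if and only if $J(\varphi)\in GL_r(\mathbb Z[\bar G])$; $G$ is finitely generated and residually finite, hence Hopfian, and has trivial centre; the fundamental Fox identity $\sum_j(\bar x_j-1)\,\partial w/\partial x_j=\bar w-1$; and the fact that the Jacobian of an inner automorphism $\iota_v\colon g\mapsto vgv^{-1}$ is $\bar v\cdot I$ plus a rank-one term built from $\bigl(\overline{\partial v/\partial x_j}\bigr)_j$, so that (using triviality of the centre) $v\mapsto\iota_v$ and $v\mapsto J(\iota_v)$ are injective.

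Let $T=\{g_1,\dots,g_{r-1}\}$ be the Gupta--Timoshenko set and let $\varphi\colon G\to G$ fix every $g_i$. The chain rule turns each equation $\varphi(g_i)=g_i$ into a linear relation among the rows of $J(\varphi)$ over $\mathbb Z[\bar G]$; in \cite{Timsolv,Timpol} these relations are shown to force $J(\varphi)\in GL_r(\mathbb Z[\bar G])$, hence $\varphi\in\mathrm{Aut}(G)$. The refinement is that the same relations, carried one step further in the module $A$, pin $J(\varphi)$ down to the precise inner-automorphism shape ``$\bar v\cdot I$ plus the rank-one $\partial v$-term'' for some (then unique) $v\in G$; therefore $\varphi=\iota_v$. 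Once $\varphi=\iota_v$, the hypothesis $\varphi(g_i)=g_i$ reads $v\in C_G(g_i)$ for all $i$, so $v\in C_G(T)$; and $T$ is chosen precisely so that $C_G(T)\subseteq A=V/V'$. For $r\ge 3$ one may include among the $g_i$ some of the free generators $x_i$, and then already their joint centraliser is trivial --- since $A$ is torsion-free over $\mathbb Z[\bar G]$ one has $C_G(x_i)=\langle x_i\rangle$, and distinct free generators have no common non-trivial power --- so in fact $v=1$ and $\varphi=\mathrm{id}$; for $r=2$ the single element of $T$ is taken inside $V/V'$ and chosen so that no non-zero $\bar x_j-1$ annihilates it, whence its centraliser is exactly $V/V'$. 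In either case $\varphi$ is conjugation by an element of $V/V'$, as required.

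The heart of the matter --- and the step I expect to be the real obstacle --- is the refinement ``the relations force $J(\varphi)$ into inner-automorphism shape''. This is a genuine computation in the polynilpotent relation module $A=V/V'$: it uses the presentation of $A$ as a submodule of the free $\mathbb Z[\bar G]$-module, the way the lower central / derived filtration of $\bar G$ interacts with that presentation, and the explicit Fox derivatives of the chosen $g_i$ --- and it is exactly this computation, carried in \cite{Timsolv,Timpol} only as far as ``$J(\varphi)$ is invertible'', that must be run to its end. A further technical wrinkle compared with the free metabelian case is that the coefficient ring $\mathbb Z[\bar G]$ is non-commutative, so one cannot reduce everything to a Jacobian determinant but must track the matrix itself through the successive nilpotent quotients of $\bar G$; equivalently, the cleanest organisation is an induction on $m$ with the free nilpotent group $F_r(\mathfrak{N}_{c_1})$ as the base case. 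By comparison, the passage from ``$\varphi=\iota_v$'' to ``$v\in V/V'$'' is a short centraliser computation, and the Hopf property of $G$ enters only to turn an invertible Jacobian into a genuine automorphism.
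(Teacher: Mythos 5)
There is a genuine gap: the step you yourself single out as ``the real obstacle'' --- showing that the relations coming from $\varphi(g_i)=g_i$ force $J(\varphi)$ into inner-automorphism shape, i.e.\ that an endomorphism fixing the $(r-1)$ elements is conjugation by an element of $V/V'$ --- is exactly the content of the word ``strongly'' in the lemma, and you do not carry it out; you only assert that the computation ``must be run to its end.'' As it stands the proposal is a plan, not a proof. Moreover its premise is inaccurate: the paper's own proof is essentially a citation, pointing out that the proofs in \cite{Timsolv} already yield that an endomorphism fixing the constructed $(r-1)$-element set is an \emph{inner} automorphism inducing the identity on $V/V'$, and that \cite{Timpol} shows the conjugating element can be taken in $V/V'$. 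So the cited papers do not stop at ``$J(\varphi)$ is invertible,'' and no new Jacobian analysis is required --- whereas in your version the lemma remains unproved until that analysis is actually done.

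The auxiliary centraliser argument is also shaky. Your claim that for $r\ge 3$ one may include free generators among the $g_i$ and then conclude $v=1$, $\varphi=\mathrm{id}$, is unsupported by the constructions in \cite{Timsolv}, \cite{Timpol} (their conclusion is conjugation by an element of $V/V'$, not triviality, and nontrivial such conjugations fixing the test elements do exist already in the metabelian case, since $V/V'$ is abelian and centralises its own elements); note also that a set consisting only of $r-1$ free generators can never be a test set, as the endomorphism killing the remaining generator fixes it. So even the reduction ``$\varphi=\iota_v$ $\Rightarrow$ $v\in V/V'$'' needs the specific form of the Gupta--Timoshenko elements, not the generic centraliser facts you invoke. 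To repair the proposal, either import the full statements proved in \cite{Timsolv} and \cite{Timpol} (which is what the paper does), or genuinely perform the module computation you deferred.
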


\begin{proof}  Let  $F_r/V$ be a free group of rank  
 $r$ of the polynilpotent variety $\mathfrak N_{c_1} \ldots
\mathfrak N_{c_m}, m \geq 1.$ The main statement in
\cite{Timsolv} is computing of the test rank of any free solvable group.  In fact, in this paper a test rank of  $F_r/V'$ was compute. Especially, it was shown that the test rank of a free group 
 of the polynilpotent  variety of the form  $\mathfrak A \mathfrak N_{c_1}
\ldots \mathfrak N_{c_m}$ was computed.  Moreover, it was shown how a test set 
$\{g_1, \ldots, g_{r-1}\}$ for $F_r/V'$ can be constructed such that: if an endomorphism $\varphi$
fixes every element of this set  then 
$\varphi$ is an inner automorphism that induces identity map on  $V/V'.$ In \cite{Timpol}, it was shown that the mentioned above inner automorphism is induced by conjugate to an element that lies in  $V/V',$ then
  $\{g_1, \ldots, g_{r-1}\}$ is  strong test set.

\end{proof}

\begin{lemma}
\label{le:2}
 Let $G = F_r/V' = F_r(\mathfrak A \mathfrak
N_{c_1} \ldots \mathfrak N_{c_m}), \,m \geq 1, \,r \geq
2,\,$    and let $\{z_1,\ldots, z_r\}$ be a basis of  $G.$   Let $H \leq G$ is isomorphic to 
 $F_n(\mathfrak A \mathfrak N_{c_1} \ldots \mathfrak
N_{c_m})$ for some  $n \geq 2.$ Let $T =
\{t_1(z_1,\ldots,z_r), \ldots, t_{n-1}(z_1,\ldots, z_r)\}$ be a strong test set in 
 $H.$ Suppose that 
$$t_i(h_1,\ldots,h_r) = t_i(z_1,\ldots, z_r),\,\, \,\,i =1,\ldots,n-1,$$ for some elements $h_1,\ldots, h_r  \in H.$ Then map 
$$z_j \mapsto h_j,\,\,j=1,\ldots,r,$$ is extended to a retraction 
 $G \rightarrow H.$
\end{lemma}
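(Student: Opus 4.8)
The plan is to extend the assignment $z_j\mapsto h_j$ to the evident endomorphism of $G$, restrict that endomorphism to $H$, invoke the strong test set hypothesis to see that the restriction is an inner automorphism of $H$, and finally correct by this inner automorphism to produce an honest retraction $G\to H$.

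First, writing $\mathfrak V=\mathfrak A\mathfrak N_{c_1}\ldots\mathfrak N_{c_m}$: since $G=F_r(\mathfrak V)$ is relatively free in $\mathfrak V$ on the basis $\{z_1,\ldots,z_r\}$ and $H\leq G$ again belongs to $\mathfrak V$ (a variety is closed under subgroups), the assignment $z_j\mapsto h_j$ ($j=1,\ldots,r$) extends uniquely to a homomorphism $\varphi\colon G\to H$. I would then restrict $\varphi$ to $H$: for each $i$ one has $\varphi(t_i(z_1,\ldots,z_r))=t_i(h_1,\ldots,h_r)=t_i(z_1,\ldots,z_r)$, the first equality because $\varphi(z_j)=h_j$ and the second by hypothesis, so $\varphi|_H\colon H\to H$ is an endomorphism fixing every element of $T$. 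Since $T$ is a strong test set in $H$, this forces $\varphi|_H$ to be an inner automorphism of $H$ --- conjugation by some element $c\in H$ (by the definition, $c$ even lies in the distinguished abelian normal subgroup of $H$, though only $c\in H$ is needed here). In particular $\varphi(H)=\varphi|_H(H)=H$.

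To finish, let $\sigma\colon G\to G$ be conjugation by this same $c$; then $\sigma$ restricts to an automorphism of $H$ with $\sigma|_H=\varphi|_H$, hence $\sigma^{-1}(H)=H$. Set $\rho=\sigma^{-1}\circ\varphi\colon G\to G$. This is a homomorphism with $\rho(G)=\sigma^{-1}(\varphi(G))\subseteq\sigma^{-1}(H)=H$, and for $x\in H$ we get $\rho(x)=\sigma^{-1}(\varphi|_H(x))=\sigma^{-1}(\sigma(x))=x$; so $\rho$ is a retraction of $G$ onto $H$. It sends $z_j\mapsto c^{-1}h_jc$, and replacing each $h_j$ by $c^{-1}h_jc$ (which changes none of the hypotheses) makes the retraction exactly $z_j\mapsto h_j$. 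The step I expect to be the real point, rather than a serious difficulty, is precisely this last correction: the strong test set property gives only that $\varphi|_H$ is \emph{some} inner automorphism of $H$, not the identity, so $\varphi$ itself need not be a retraction, and composition with $\sigma^{-1}$ is essential; the fact that $c\in H$ --- needed so that $\rho$ still maps into $H$ --- is immediate from the definition of a strong test set.
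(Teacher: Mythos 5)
Your argument is correct and is essentially the paper's own proof: extend $z_j\mapsto h_j$ to $\varphi\colon G\to H$, use the strong test set to see that $\varphi|_H$ is conjugation by some $c\in H$, and compose with conjugation by $c^{-1}$ to obtain a retraction of $G$ onto $H$. The only weak point is your closing parenthetical: replacing $h_j$ by $c^{-1}h_jc$ \emph{does} change the hypotheses, since $t_i(c^{-1}h_1c,\ldots,c^{-1}h_rc)=c^{-1}t_i(z_1,\ldots,z_r)c$ need not equal $t_i(z_1,\ldots,z_r)$; but that final adjustment is unnecessary --- the paper's own retraction likewise extends $z_j\mapsto h_j^{a^{-1}}$ rather than $z_j\mapsto h_j$, and only the existence of a retraction onto $H$ is used in the sequel.
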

\begin{proof}
 Extend map $z_j \mapsto
h_j,\,\,j=1,\ldots,r,$ to endomorphism $\varphi : G
\rightarrow H.$ Let $\varphi|_H$ be restriction of $\varphi$ to
$H, $ that is an endomorphism of  $H.$

Then $\varphi|_H(t_i(z_1,\ldots,z_r)) = \varphi
(t_i(z_1,\ldots,z_r))= t_i(\varphi(z_1),\ldots, \varphi(z_r)) =
t_i(h_1,\ldots,h_r) = t_i(z_1,\ldots, z_r), \,i=1,\ldots,n-1.$
Hence $\varphi|_H$ is an inner automorphism   $\alpha_a$
of $H$ induced by  $a \in V/V'.$ Consider an inner automorphism  
$\psi = \varphi_{a^{-1}} \varphi,$ where $\varphi_{a^{-1}}$ 
of $G$ corresponds to  $a^{-1}.$ For
$h \in H$ we have $$\psi (h) = \varphi_{a^{-1}} \varphi (h)=
\varphi_{a^{-1}}  (h^a) = h,$$ and for $g \in G$ $$\psi (g) =
(\varphi (g))^{a^{-1}} \in H.$$ Thus $\psi$ is a seeking retraction onto  
$H$.

\end{proof}

The lemmas above imply the following assertion.

\begin{proposition}
\label{pr:2} {\it Let  $H$ be a subgroup of $F_r(\mathfrak A
\mathfrak N_{c_1} \ldots \mathfrak N_{c_l})$  generated by $(m+1)$
elements and   rank of  $H_{ab} = H/H'$ is equal to  $m+1.$  Then, if 
$H$ is  $m$-verbally closed subgroup, it is a retract.}

\end{proposition}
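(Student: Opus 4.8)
The plan is to reduce the statement to a direct application of Lemma \ref{le:2}, so the real work is to produce, inside $H$, a preimage of a strong test set of $H$ under a suitable endomorphism of $G$, and this preimage must be obtained by solving an $m$-system of split equations. Write $G = F_r(\mathfrak A \mathfrak N_{c_1} \ldots \mathfrak N_{c_l})$ with basis $\{z_1,\ldots,z_r\}$ and let $H = \langle u_1,\ldots,u_{m+1}\rangle$ with $r_{ab}(H) = m+1$. The first step is to identify the isomorphism type of $H$: since $H$ lies in the relatively free group $G$ of the polynilpotent variety $\mathfrak A\mathfrak N_{c_1}\ldots\mathfrak N_{c_l}$, it is itself a group in this variety, it is generated by $m+1$ elements, and the condition $\dim H/H' = m+1$ forces $H$ to be free of rank $m+1$ in this same variety (no relations can hold, as any nontrivial relation would drop the abelianized rank below $m+1$). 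So $H \cong F_{m+1}(\mathfrak A\mathfrak N_{c_1}\ldots\mathfrak N_{c_l})$, and by Lemma \ref{le:1} it admits a strong test set $T = \{t_1,\ldots,t_m\}$ consisting of $m$ elements, each $t_i = t_i(u_1,\ldots,u_{m+1})$ a word in the free generators of $H$.

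The second step is the key move. I want to find $h_1,\ldots,h_r \in H$ such that the endomorphism $\varphi: G \to H$ sending $z_j \mapsto h_j$ restricts on $H$ to an inner automorphism induced by an element of $V/V'$; Lemma \ref{le:2} then finishes the proof. To arrange this, consider the identity endomorphism setup: we have the inclusion $\iota: H \hookrightarrow G$ and we would like to "pull $G$ onto $H$" in a way that is the identity on the test set. Concretely, express each free generator $u_k$ of $H$ as a word $u_k = u_k(z_1,\ldots,z_r)$ in the basis of $G$; then $t_i(u_1,\ldots,u_{m+1})$ becomes a word $\bar t_i(z_1,\ldots,z_r)$ in the $z_j$'s. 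Now form the system of $m$ split equations in $r$ unknowns $x_1,\ldots,x_r$:
$$\bar t_i(x_1,\ldots,x_r) = t_i(u_1,\ldots,u_{m+1}), \qquad i = 1,\ldots,m,$$
where the right-hand sides are the elements $t_i \in H$. This system has a solution in $G$, namely $x_j = z_j$. Since $H$ is $m$-verbally closed in $G$ (the left-hand sides are split, being words in the $x_j$ alone, and there are $m$ of them with right sides in $H$), the system has a solution $x_j = h_j \in H$. By construction $t_i(h_1,\ldots,h_r)$-rewritten-through-$H$'s-generators equals $t_i(u_1,\ldots,u_{m+1})$ for each $i$; i.e. the endomorphism $\varphi|_H$ fixes the strong test set $T$ of $H$.

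The third step is bookkeeping: apply Lemma \ref{le:2} with these $h_j$. That lemma says exactly that $z_j \mapsto h_j$ extends to a retraction $G \to H$, so $H$ is a retract, completing the proof. The main obstacle I anticipate is the careful setup of step two — making sure the equations are genuinely \emph{split} (no constants on the left) and that the rewriting "$t_i$ as a word in the $z_j$'s" versus "$t_i$ as a word in $H$'s generators" is compatible, so that a solution $h_j \in H$ really does make $\varphi|_H$ fix $T$ pointwise rather than just fixing the images under the ambient map. One must also double-check that $H$ being free of rank $m+1$ in the variety is genuinely forced by $r_{ab}(H) = m+1$ together with $H$ being $(m+1)$-generated; this uses that in a relatively free group of this variety, any $(m+1)$-generated subgroup whose abelianization already has rank $m+1$ cannot satisfy extra relations, which is the relatively-free analogue of "generators that stay independent modulo the commutator are a free basis."
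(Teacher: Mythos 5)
Your overall architecture is exactly the intended one: the paper offers no separate argument for Proposition \ref{pr:2} beyond ``the lemmas above imply it,'' and your step two reconstructs precisely that derivation --- write the $m$ elements of a strong test set of $H$ (Lemma \ref{le:1}) as words in the basis $z_1,\ldots,z_r$, form the system of $m$ split equations with these elements of $H$ on the right, solve it in $H$ by $m$-verbal closedness (the solution $x_j=z_j$ exists in $G$), and feed the resulting $h_j$ into Lemma \ref{le:2}. That part is fine, including the count: $H$ of rank $m+1$ has a strong test set of $m$ elements, matching the $m$ equations you are allowed.

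The genuine gap is in your first step, the identification $H\cong F_{m+1}(\mathfrak A\mathfrak N_{c_1}\ldots\mathfrak N_{c_l})$. Your stated reason --- ``no relations can hold, as any nontrivial relation would drop the abelianized rank below $m+1$'' --- is false as a general principle about subgroups of relatively free groups. For instance, in the free metabelian group $M_2=F_2(\mathfrak A^2)$ with basis $z_1,z_2$, the subgroup $K=\langle z_1,\,c\rangle$ with $c=[z_1,z_2]$ is $2$-generated, has $K/K'\cong\mathbb Z^2$, yet is isomorphic to $\mathbb Z\wr\mathbb Z$ and satisfies the extra relation $[c,c^{z_1}]=1$, so it is not free metabelian of rank $2$. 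What is true is Baumslag's theorem \cite{B}, used throughout the paper: elements of $G=F_r(\mathfrak A\mathfrak N_{c_1}\ldots\mathfrak N_{c_l})$ whose images are independent modulo $G'$ (the ambient derived subgroup, not $H'$) freely generate a relatively free subgroup. To get that independence from the hypothesis you must use the closedness assumption: $m$-verbal closedness implies verbal closedness (repeat a single split equation $m$ times), and then, as observed in the paper before Proposition \ref{pr:1}, $H\cap G'=H'$, so $H/H'\cong HG'/G'\leq G/G'$; since $H/H'$ has rank $m+1$, the images of the $m+1$ generators in the free abelian group $G/G'$ are independent, and Baumslag's theorem gives $H\cong F_{m+1}(\mathfrak A\mathfrak N_{c_1}\ldots\mathfrak N_{c_l})$ with these generators as a basis (note $m+1\geq 2$, as Lemma \ref{le:1} requires). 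In my example above $HG'/G'$ has rank $1$, which is exactly why the subgroup is not verbally closed and why your shortcut breaks without this step. With that correction the rest of your proof goes through unchanged.
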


\begin{lemma}
\label{le:3}
(\cite{RomKhis}).  Let  $N$ be a verbal subgroup of a group $G.$ If  $H$ is a verbally closed subgroup of   $G,$ then its image $H_N = HN/N$ is verbally closed in  $G_N =
G/N.$ Specifically, if $G/G'$  is a free abelian group of a finite rank then the image $\bar{H}$  of its verbally closed subgroup  $H$ in  $G/G'$  is a direct factor.
\end{lemma}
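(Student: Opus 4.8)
The plan is to prove the two assertions separately: the first by a direct manipulation of split equations, and the second by reducing it to the well-known fact that pure subgroups of finitely generated free abelian groups are direct summands.

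For the first assertion, recall that a verbal subgroup is fully invariant, hence normal, so $G_N=G/N$ and $H_N=HN/N=\{hN:h\in H\}$ make sense. Write $N=V(G)$ for some set of group words $V$; by the definition of a verbal subgroup, every element of $N$ is a finite product $\prod_k v_k(\bar g_k)^{\varepsilon_k}$ with $v_k\in V$, $\varepsilon_k=\pm1$ and $\bar g_k$ a tuple from $G$. Now suppose a split equation $w(x_1,\dots,x_n)=\bar h$ with $\bar h\in H_N$ has a solution in $G_N$; pick $h\in H$ with $\bar h=hN$. A solution in $G/N$ means there are $g_1,\dots,g_n\in G$ and an element $n\in N$ with $w(g_1,\dots,g_n)=h\,n$; expand $n=\prod_k v_k(\bar g_k)^{\varepsilon_k}$. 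I would then form the constant-free split equation
$$W:=w(x_1,\dots,x_n)\Bigl(\prod_k v_k(\bar y_k)^{\varepsilon_k}\Bigr)^{-1}=h$$
in the original variables $x_i$ together with fresh variables filling the tuples $\bar y_k$. Its left-hand side is a word without constants, $h\in H$, and it has the solution $x_i=g_i,\ \bar y_k=\bar g_k$ in $G$; hence verbal closeness of $H$ in $G$ yields a solution $x_i=h_i,\ \bar y_k=\bar h_k$ in $H$. Then $w(h_1,\dots,h_n)=h\cdot\prod_k v_k(\bar h_k)^{\varepsilon_k}$, and since $\prod_k v_k(\bar h_k)^{\varepsilon_k}\in V(H)\le V(G)=N$, reducing modulo $N$ gives $w(h_1N,\dots,h_nN)=hN=\bar h$. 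Thus $(h_1N,\dots,h_nN)$ is a solution in $H_N$, so $H_N$ is verbally closed in $G_N$.

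For the second assertion, apply the first part with $N=G'$, which is verbal (defined by the word $[x,y]$): the image $\bar H=HG'/G'$ is verbally closed in the free abelian group $A:=G/G'$, which we identify with $\mathbb Z^r$. It remains to observe that a verbally closed subgroup of $\mathbb Z^r$ is a direct factor. In additive notation, split equations over $A$ take the form $k_1x_1+\dots+k_nx_n=\bar h$; in particular, for any $m\in\mathbb Z$ the one-variable equation $mx=\bar h$ with $\bar h\in\bar H$ is solvable in $\mathbb Z^r$ precisely when $\bar h\in m\mathbb Z^r$, and then verbal closeness of $\bar H$ forces $\bar h\in m\bar H$. Hence $\bar H\cap m\mathbb Z^r=m\bar H$ for every $m$, i.e. $\bar H$ is a pure subgroup of $\mathbb Z^r$. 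Consequently $A/\bar H$ is torsion-free; being finitely generated it is free, the short exact sequence $0\to\bar H\to A\to A/\bar H\to 0$ splits, and $\bar H$ is a direct summand of $A=G/G'$.

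I do not expect a serious obstacle; the argument is essentially bookkeeping. The one point that needs care is the first step: converting the congruence $w(g_1,\dots,g_n)\equiv h\pmod N$ into a genuine constant-free split equation over $H$ by introducing auxiliary variables for the word-values that constitute the element of $N$, and then verifying that the solution produced in $H$ descends correctly because word-values on elements of $H$ again lie in $N=V(G)$. Everything else uses only standard facts about verbal subgroups and about finitely generated abelian groups.
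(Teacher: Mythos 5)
Your proof is correct, and it is essentially the standard argument: the paper itself gives no proof of this lemma (it is quoted from \cite{RomKhis}), and your two steps --- absorbing the element of $N=V(G)$ into the split equation via fresh variables for the word values, and then deducing purity (hence direct-summand-ness) of $\bar H$ in the finitely generated free abelian group $G/G'$ from solvability of the equations $x^m=\bar h$ --- are exactly the intended reasoning. No gaps; the only point needing care (that the values $v_k(\bar h_k)$ on elements of $H$ again lie in $N$) is handled correctly.
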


\begin{lemma}
\label{le:4}
  Let $G=S_{r,d}$ be a free solvable group of class
 $d \geq 2$   with basis $z_1, \ldots, z_r, \,\,r \geq
2.$ Let $H\leq G$ be a verbally closed subgroup generated by  elements $c_1z_1, \ldots, c_rz_r, c_{r+1} \ldots,$ where any  $c_i$ lies in 
 $G^{(d-1)}.$ Then $H^{(d-1)}$ is $\mathbb
Z[H/H^{(d-1)}]-$closed subgroup of  $G^{(d-1)},$ i.e.,  if for any 
 $c \in G^{(d-1)}$ and $0 \neq \alpha \in
\mathbb Z[G/G^{(d-1)}]$ we have $c^\alpha \in
H^{(d-1)},$ then $c \in H^{(d-1)}.$
\end{lemma}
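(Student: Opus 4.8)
The plan is to exploit verbal closeness by writing down a single split equation whose solvability in $G$ is forced by the hypothesis $c^\alpha \in H^{(d-1)}$, and whose solutions in $H$ give exactly $c \in H^{(d-1)}$. The key observation is that $G^{(d-1)}$ is the last nontrivial term of the derived series, so it is abelian and carries a natural module structure over $\mathbb{Z}[G/G^{(d-1)}]$; the same holds for $H^{(d-1)}$ over $\mathbb{Z}[H/H^{(d-1)}]$, and since $H$ is generated by elements $c_iz_i$ (with $c_i \in G^{(d-1)}$) together with elements of $G^{(d-1)}$, the image of $H$ in $G/G^{(d-1)}$ is all of $G/G^{(d-1)} \cong S_{r,d-1}$. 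Thus the module actions are compatible: an element $\alpha \in \mathbb{Z}[G/G^{(d-1)}]$ can be realized by conjugating by a word $u(z_1,\ldots,z_r)$-type combination, and because $c_iz_i \equiv z_i$ modulo $G^{(d-1)}$, the conjugating element can be taken inside $H$.

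First I would fix $c \in G^{(d-1)}$ and $0 \neq \alpha \in \mathbb{Z}[G/G^{(d-1)}]$ with $c^\alpha \in H^{(d-1)}$, write $\alpha = \sum_{k} n_k \bar{g_k}$ where $\bar{g_k}$ runs over finitely many cosets and $g_k = g_k(z_1,\ldots,z_r)$ are corresponding group words in the basis, so that $c^\alpha = \prod_k (g_k c^{n_k} g_k^{-1})$, an identity in the abelian group $G^{(d-1)}$. Set $h = c^\alpha \in H^{(d-1)}$. Now introduce variables and build the split equation: replace $c$ by a single new variable $x$ and each basis element $z_j$ occurring in the words $g_k$ by a variable $y_j$, obtaining a word $w(x, y_1,\ldots,y_r) = \prod_k g_k(y_1,\ldots,y_r)\, x^{n_k}\, g_k(y_1,\ldots,y_r)^{-1}$. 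This is a split word with no constants, and $w(c, z_1,\ldots,z_r) = h$, so the split equation $w(x,y_1,\ldots,y_r) = h$ has a solution in $G$. By verbal closeness of $H$ it has a solution $x = \tilde{c}, y_j = \tilde{h}_j$ in $H$.

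Next I would analyze this solution in $H$. Since $d \geq 2$, reading the equation modulo $H^{(d-1)}$: the left side $w(\tilde c, \tilde h_1,\ldots,\tilde h_r)$ becomes $\tilde c^{\,N}$ times a product of commutator-type adjustments, where $N = \sum_k n_k$; more carefully, modulo $H^{(d-1)}$ the conjugations by $g_k(\tilde h_j)$ become trivial on $\tilde c$ only if $\tilde c \in H^{(d-1)}$ already, so one must track $w$ modulo $H^{(d-1)}$ as an element of the abelian group $H^{(d-1)}$... here is where I would use that $G^{(d-1)}$ is abelian and that $\tilde c \in H$ projects to some $\bar{\tilde c} \in H/H^{(d-1)}$. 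The equation forces $\bar{\tilde c}^{\,\epsilon(\alpha)} = 1$ in $H/H^{(d-1)} \cong S_{r,d-1}$, where $\epsilon(\alpha) = N$ is the augmentation; if $N \neq 0$ this already pins $\bar{\tilde c}$ (the free solvable quotient is torsion-free in a suitable sense), and if $N = 0$ a slightly finer argument using the lower term of the derived series of $H/H^{(d-1)}$ is needed. In any case, once $\tilde c \in H^{(d-1)}$, the equation $\tilde c^{\,\alpha'} = h = c^\alpha$ holds in the module $H^{(d-1)} \leq G^{(d-1)}$, where $\alpha'$ is the image of $\alpha$ under the identification $H/H^{(d-1)} \to G/G^{(d-1)}$; torsion-freeness of the module $G^{(d-1)}$ over the domain $\mathbb{Z}[G/G^{(d-1)}]$ (Magnus embedding: it embeds in a free module) then gives $c = \tilde c \in H^{(d-1)}$.

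The main obstacle I anticipate is the case distinction forced by whether the augmentation $\epsilon(\alpha)$ vanishes, and more generally controlling the projection $\bar{\tilde c}$ of the $H$-solution: a priori $\tilde c$ need not lie in $H^{(d-1)}$, so I must rule out "spurious" solutions where the conjugating words $g_k(\tilde h_j)$ conspire with a non-central $\tilde c$ to still produce $h$. The way I expect to handle this is to augment the system — either by passing to the module-equation directly or by adding the split equation expressing that $x$ is a product of $(d-1)$-fold nested commutators (which is automatically solvable in $G$ for $x = c$), reducing to the case treated for module-closedness; and to invoke the fact that $G^{(d-1)}$ sits inside a free $\mathbb{Z}[G/G^{(d-1)}]$-module via the Magnus embedding, which supplies the needed torsion-freeness and faithfulness of the action. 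Once the solution is known to be central, the remaining verification is the routine module computation $c = \tilde c$.
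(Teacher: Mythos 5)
Your split equation does not control its solutions tightly enough, and this is where the proposal breaks down. A solution $x=\tilde c$, $y_j=\tilde h_j$ in $H$ gives you only the module identity $\tilde c^{\,\alpha(\bar{\tilde h}_1,\ldots,\bar{\tilde h}_r)} = c^{\,\alpha(a_1,\ldots,a_r)}$, where the scalar on the left is $\alpha$ evaluated at the completely uncontrolled images $\bar{\tilde h}_j$ of the solution components in $A=G/G^{(d-1)}$, not $\alpha$ itself. From $\tilde c^{\,\alpha'}=c^{\,\alpha}$ with $\alpha'\neq\alpha$, neither torsion-freeness of the module $G^{(d-1)}$ nor the Magnus embedding yields $c=\tilde c$, nor even $c\in H^{(d-1)}$; ruling out such skew solutions is precisely the substance of the lemma. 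The paper's proof handles this by building the exponent out of the unknowns themselves and, crucially, multiplying it by factors $(1-\bar x_i^{m_i})(1-\bar x_i^{-m_i})$ with the $m_i$ chosen large; then, applying Fox derivatives to both sides, comparing norms in the augmentation ideal (to show no solution component can lie in $G'$), and invoking Timoshenko's rigidity result on equalities $(1-s_1^m)\alpha=(1-y_1^m)\beta$, it forces $\bar h_i=a_i^{\pm 1}$ for every $i$; only after that do the two sides have equal derivatives, giving $c=c(h_1,\ldots,h_r)\in H^{(d-1)}$. Nothing in your equation plays this role.

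Your reduction to ``$\tilde c\in H^{(d-1)}$'' is also incomplete in the typical case. Modulo $G^{(d-1)}$ your equation reads $\prod_k \bar g_k(\bar{\tilde h})\,\bar{\tilde c}^{\,n_k}\,\bar g_k(\bar{\tilde h})^{-1}=1$; when the augmentation of $\alpha$ vanishes (e.g.\ $\alpha=1-a_1$, where the equation modulo $G^{(d-1)}$ merely says $\bar{\tilde c}$ commutes with $\bar g(\bar{\tilde h})$) this imposes essentially no constraint on $\bar{\tilde c}$, and the ``slightly finer argument'' you defer is exactly what is missing. Moreover your proposed remedy --- adding a second split equation forcing $x$ to be a product of $(d-1)$-fold nested commutators --- is not available: verbal closedness concerns a single split equation, and passing to systems is the strictly stronger notion of $l$-verbal closedness that the paper is careful to distinguish (Remark \ref{re:2}). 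The correct device, used in the paper, is substitution rather than addition: write $c=c(z_1,\ldots,z_r)$ with $c(x_1,\ldots,x_r)$ a word in the $(d-1)$st derived subgroup of the free group and replace the basis elements by the unknowns, so that any $H$-solution automatically lands in $H^{(d-1)}$. Even with that repair, the first gap (forcing $\bar h_i=a_i^{\pm 1}$ so that the two module scalars can be matched) remains, and that is the core of the paper's argument.
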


\begin{proof} Denote $ A = G/G^{(d-1)},\,\, B =
H/H^{(d-1)},$  then $G^{(d-1)} \cap H = H^{(d-1)}.$

Let $1 \neq c^\alpha \in H^{(d-1)}.$ Write $c$ as  $c = c(z_1, \ldots, z_r).$ Then $\alpha$ has an expression of the form  $\alpha = \Sigma m_p\overline{h}_p,$ where $m_p
\in \mathbb Z,$ $\overline{h}_p \in B.$ Let $a_i =
z_iG^{(d-1)}.$ Then every element  $\overline{h}_p$
lies in the subgroup generated by   $a_1,\ldots, a_r,$
i.e.,  $\alpha = \alpha(a_1, \ldots, a_r).$

Consider an equation  in unknowns $x_1,\ldots,x_r:$
$$c(x_1,\ldots, x_r)^{(1-\overline{x}_1^{m_1})(1-\overline{x}_1^{-m_1})\ldots
(1-\overline{x}_r^{m_r})(1-\overline{x}_r^{m_r})\alpha(\overline{x}_1,
\ldots,\overline{x}_r)}$$ $$ = c(z_1,\ldots,
z_r)^{(1-a_1^{m_1})(1-a_1^{-m_1})\ldots
(1-a_r^{m_r})(1-a_r^{-m_n})\alpha(a_1, \ldots,a_r)},
$$ where $m_i$ are integers. This equation has a solution in  $G.$ Hence, it has a solution 
 $x_i = h_i$ in $H.$ Then
\begin{equation}
\label{eq:2}
c(h_1,\ldots, h_r)^{(1-\overline{h}_1^{m_1})(1-\overline{h}_1^{-m_1})\ldots
(1-\overline{h}_r^{m_r})(1-\overline{h}_r^{-m_r})\alpha(\overline{h}_1,
\ldots,\overline{h}_r)}=
\end{equation}
 $$ = c(z_1,\ldots,
z_r)^{(1-a_1^{m_1})(1-a_1^{-m_1})\ldots
(1-a_r^{m_r})(1-a_r^{-m_r})\alpha(a_1, \ldots,a_r)}.
$$ Here  $\overline{g}$  denotes the image of  \,  $g \in G$  in $A.$

 The group ring  $\mathbb Z[G]$ is equipped by the left Fox 
derivations $\partial_j(g)$ with values in 
 $\mathbb Z[A]$ (see details in  \cite{Timbook}, \cite{RemSoc},
\cite{KrFox}, \cite{Romess}).
 For $\alpha
\in \mathbb Z[A]$ and $b \in G^{(d-1)}$ the following equality is valid:
$$\partial_i(b^\alpha) = \alpha \partial_i(b).$$ Computing the values of $j$th derivation of two sides of the equality 
 (\ref{eq:2}) we obtain
\begin{equation}
\label{eq:3}
(1-\overline{h}_1^{m_1})(1-\overline{h}_1^{-m_1})\ldots
(1-\overline{h}_r^{m_r})(1-\overline{h}_r^{-m_r})
\alpha(\overline{h}_1,
\ldots,\overline{h}_r)\partial_j(c(h_1,\ldots, h_r)) =
\end{equation}
 $$= (1-a_1^{m_1})(1-a_1^{-m_1})\ldots
(1-a_r^{m_r})(1-a_n^{-m_r})\alpha(a_1, \ldots,a_r)\partial_j(c).$$

Let $\Delta$ be  the fundamental ideal of  $\mathbb Z[A].$ 
$\Delta$  is endowed with a norm  $\omega$   (see \cite{Romanovski}). Namely, for an element 
 $u \in \Delta$ we set $\omega(u) = n,$ where
$u \in \Delta^n \backslash \Delta^{n+1}.$

We'll show that no  $h_i$ lies in $G'.$
Since $c = c(z_1,\ldots, z_r)\not= 1,$  there is $j$ for which $\partial_j(c) \neq 0.$
 We choose $j$, for which the norm of   $\partial_j(c)$ is minimal among all 
$\omega(\partial_1(c)), \ldots, \omega(\partial_r(c)).$ We can assume for simplicity that  $j=1.$ In  (3) we compare the norms of the left and the right sides.  By the known rule we have
$$\partial_1(c(h_1,\ldots,h_r)) =
\partial_1(c)[\overline{h}_1,\ldots,\overline{h}_r]\partial_1(h_1) + \ldots +
\partial_r(c)[\overline{h}_1,\ldots,\overline{h}_r]\partial_1(h_r),
$$ where $\partial_i(c)[\overline{h}_1,\ldots,\overline{h}_r]$
means a result of substitution to  $\partial_i(c)$
of elements  $\overline{h}_q$ instead $\overline{z}_q.$  By choice of $j$
we obtain  $\omega(\partial_1c(h_1,...,h_r)) \leq
\omega(\partial_1(c)).$ If at least one element  $h_i$ belongs to
$G',$ the norm of the left side of  (3) is less than the norm of the right side, because
$\omega(1-a_i) = 1 < \omega( 1 - \overline{h}_i),$ for $h_i \in
G'.$

In \cite{Timsolv},   the following statement was proved.
 Let $S$ be a free polynilpotent group of rank 
$r \geq 2$ with basis $\{s_1,...,s_r \},$ and $0\not= \alpha \in \mathbb Z [S].$ Let  $p_i $ be the least
$s_i-$ exponent of  $\alpha ,$  and $q_i$ be the greatest $s_i -$
exponent of it. Let a positive integer  $m$ is greater than 
$$ \sum_{i=1}^r (|p_i|+|q_i|).$$ For $y_1\in G\setminus S',$ equality
\begin{equation}
\label{eq:4}
(1-s_1^m)\alpha
=(1-y_1^m)\beta
\end{equation}
\noindent
is possible for $\beta \in
\mathbb Z [S]$ if and only if  $y_1=s_1$ or
$y_1=s_1^{-1}.$

\bigskip

Now we choose an available $m_i$. Then $\overline{h}_1 =
a_1^{\pm 1}.$ The ring  $\mathbb Z[B]$ has no zero divisors. The expression 
 $(1 - a_i^{m_i})(1-a_i^{-m_i})$ is not changed under
substitution  $a_i$ to $a_i^{-1}.$ We cancel step by step  (\ref{eq:3})
to $(1 - a_i^{m_1})(1 - a_i^{-m_1})$ and obtain 
 $\overline{h}_i = a_i^{\pm 1}$ for all $i = 1,\ldots,r.$
Then we get $\partial_i(c(h_1,\ldots,
h_r)) =
\partial_i(c),$   and
$$  c = c(h_1,\ldots,h_r) \in H^{(d-1)}.$$

 \end{proof}

\begin{lemma}
\label{le:5}
Let $H = \langle h \rangle$ be a non-trivial cyclic subgroup of  $G=S_{r,d}.$
 Then the following conditions are equivalent:
\begin{enumerate}
\item
$H$ is a verbally closed in $G;$
\item
$H$ is a retract of  $G;$
\item
the image $\bar{h}$ of $h$ in $G/G',$ i.e., in the free abelian group   $A_r,$ is primitive. 
\end{enumerate}
\end{lemma}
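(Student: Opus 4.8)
The plan is to establish the cycle of implications $(2)\Rightarrow(1)\Rightarrow(3)\Rightarrow(2)$. The implication $(2)\Rightarrow(1)$ is immediate from Remark \ref{re:1} together with the trivial observation that a retract is verbally closed. For $(1)\Rightarrow(3)$, I would argue by contraposition using Lemma \ref{le:3}: if $\bar h$ is not primitive in $A_r=G/G'$, then $\langle\bar h\rangle$ cannot be a direct factor of the free abelian group $A_r$ (a cyclic subgroup $\langle a\rangle$ of a free abelian group is a direct summand iff $a$ is primitive, i.e. its coordinate vector is unimodular). But Lemma \ref{le:3} says that the image of a verbally closed subgroup of $G$ in $G/G'$ is a direct factor; hence $H$ verbally closed forces $\bar h$ primitive. (One should also dispose of the degenerate case $\bar h=1$, i.e. $h\in G'$: then $r_{ab}(H)=0$ and Proposition \ref{pr:1} gives $H=1$, contradicting non-triviality — so a verbally closed nontrivial cyclic subgroup never lies in $G'$, consistent with $\bar h$ being primitive and in particular nonzero.)

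The substantive implication is $(3)\Rightarrow(2)$. Suppose $\bar h$ is primitive in $A_r$. Then $\bar h$ can be completed to a basis $\bar h=\bar z_1,\bar z_2,\dots,\bar z_r$ of $A_r$, and by lifting this basis (using that $G=S_{r,d}$ is free solvable on $r$ generators and that any set mapping to a basis of $G/G'$ is itself a basis of $G$, the standard ``basis lifting'' property of free polynilpotent groups) we obtain a basis $h=c_1z_1, z_2,\dots,z_r$ of $G$ with $c_1\in G'$ — after renaming, a basis $z_1,\dots,z_r$ of $G$ with $h\equiv z_1\pmod{G'}$. Here is where I would split on $d$. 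For $d=2$ the group $G=M_r$ is free metabelian and $G^{(d-1)}=G'$ is abelian; for general $d\ge 2$, write $h=cz_1$ with $c\in G'$, and further decompose $c$ modulo $G^{(d-1)}$. The idea is to use Lemma \ref{le:4}: arranging $H=\langle h\rangle$ with $h=c_1z_1$, $c_1\in G^{(d-1)}$ (which one reduces to by an automorphism of $G$ fixing the relevant data modulo lower terms — this is the point that needs care), Lemma \ref{le:4} tells us that $H^{(d-1)}$ is $\mathbb Z[H/H^{(d-1)}]$-closed in $G^{(d-1)}$. Since $H$ is cyclic, $H^{(d-1)}=1$ for $d\ge 2$, so the content of Lemma \ref{le:4} in this case is that the conclusion of Lemma \ref{le:4}'s proof applies: no $h_i$ in a solution lies in $G'$, and one can then directly construct the retraction $z_1\mapsto h=z_1c_1,\ z_j\mapsto z_j$ for $j\ge 2$ — or more simply $z_1\mapsto h$, $z_j\mapsto 1$ — and verify it fixes $h$. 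Actually the cleanest route: since $\bar z_1=\bar h$ is part of a basis, the endomorphism $\rho:G\to H$ defined by $z_1\mapsto h$, $z_j\mapsto 1$ $(j\ge 2)$ is well-defined (free generators map anywhere); one must check $\rho(h)=h$. Writing $h=h(z_1,\dots,z_r)$ with $h\equiv z_1\pmod{G'}$, we have $\rho(h)=h(h,1,\dots,1)$, and I would show $h(h,1,\dots,1)=h$ by a descending induction along the derived series, using the $\mathbb Z[G/G^{(k)}]$-module structure of the quotients $G^{(k)}/G^{(k+1)}$ and the fact that $h\equiv z_1$ modulo $G'$ — essentially the Fox-derivative computation already performed in the proof of Lemma \ref{le:4}, which shows that substituting $z_1\mapsto h$ (with $h$ congruent to $z_1$ mod $G'$) and killing the other generators acts as the identity on $H=\langle h\rangle$.

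I expect the main obstacle to be the bookkeeping in $(3)\Rightarrow(2)$: precisely, going from ``$\bar h$ is primitive'' to ``$h$ can be taken as $c_1z_1$ with $c_1\in G^{(d-1)}$ after an automorphism'' so that Lemma \ref{le:4} applies verbatim, and then assembling the retraction. For $d=2$ this is transparent (the commutator subgroup is already the last term of the derived series), and indeed Theorem \ref{th:1} covers the general $r_{ab}(H)=1$ case; Lemma \ref{le:5} is really the cyclic case of Theorem \ref{th:1} made self-contained, so I would expect the proof to either cite the machinery of Lemma \ref{le:4} directly or to be absorbed into the proof of Theorem \ref{th:1}. The verification $\rho(h)=h$ is the one computation worth doing carefully, and it is genuinely a finite induction of length $d$ using the module structure on each derived quotient.
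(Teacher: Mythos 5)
Your proposal follows essentially the same route as the paper: $(1)\Rightarrow(3)$ via Lemma \ref{le:3} (with your extra remark on the degenerate case $h\in G'$ being a harmless refinement), $(3)\Rightarrow(2)$ by lifting the primitive image $\bar h$ to a basis element $z_1$ of $G$ and taking the endomorphism $z_1\mapsto h$, $z_j\mapsto 1$ for $j\geq 2$, and $(2)\Rightarrow(1)$ via Remark \ref{re:1}. The only point you over-engineer is the check $\rho(h)=h$: since $\rho(G)\leq H=\langle h\rangle$ is abelian, $\rho(G')=1$, so writing $h=z_1u$ with $u\in G'$ gives $\rho(h)=\rho(z_1)\rho(u)=h$ at once — no descending induction along the derived series, no Fox-derivative computation, and no appeal to Lemma \ref{le:4} (which is vacuous for cyclic $H$) is needed.
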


\begin{proof} By Lemma  \ref{le:3}   1 implies  3.

Let we show that  3 implies 2. Write $h$ as $h = au,$ where $a$ is a primitive element of 
 $A_r,\,\,  u \in G'.$ Since any basis 
of $A_r$ is induced by a basis of  $G$ (moreover, it is induced by a basis of  $F_r,$ see for example
\cite{Romess}). Hence there is a preimage  $z$ of $a$ in $G$ that belongs to a basis of  $G.$ We take the endomorphism of $G$  that maps  $z$ to  $,$
and any other  element of this basis  to   $1.$ We see that this endomorphism is a retraction 
  $G\rightarrow H.$

By Remark \ref{re:1},   2 implies 1.

\end{proof}

\begin{lemma}
\label{le:6} Let  $H = \langle g, f \rangle$ be a  $2$-generated
non-cyclic subgroup of a free solvable group $G =S_{r,d}$
of rank $r \geq  2$.  Then the following statement is valid: if $H$ is a verbally closed subgroup of 
 $G,$ then $g, f$ induce in $A_r =
G/G'$ (free abelian group of rank  $r$) a direct factor of rank 
2, and $H$ is a free solvable group of rank  2
and class $d.$
\end{lemma}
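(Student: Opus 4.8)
The plan is to prove the two assertions in turn: that $\bar g,\bar f$ span a direct factor of rank $2$ in $A_r$, and that the canonical epimorphism $\varphi\colon S_{2,d}\to H$ sending the free generators to $g,f$ is an isomorphism. Throughout I would use that a verbally closed $H$ satisfies $H\cap G^{(i)}=H^{(i)}$ for every $i$, proved at each level by the same single‑split‑equation argument the text gives for $i=1$.

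\textit{The abelianization.} By Lemma~\ref{le:3} the image $\bar H=HG'/G'$ is a direct factor of $A_r$, and since $H$ is $2$‑generated $r_{ab}(H)\le 2$, while $r_{ab}(H)\ne 0$ by Proposition~\ref{pr:1} (else $H=1$, contradicting non‑cyclicity). To exclude $r_{ab}(H)=1$: after a Nielsen transformation of the pair $(g,f)$ (which does not change $H$ and acts on $(\bar g,\bar f)$ through $\mathrm{GL}_2(\mathbb Z)$) I may assume $\bar g$ primitive in $A_r$ and $f\in H\cap G'=H'$. Then $H/H'\cong\mathbb Z$, and since $[g,f]$ normally generates $H'$ in $H$, the $\mathbb Z[H/H']$‑module $H'/H''$ is cyclic, generated by the image of $[g,f]=(\bar g-1)\cdot(\text{image of }f)$; hence the image of $f$ also generates $H'/H''$. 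On the other hand $H'/H''=(H\cap G')/(H\cap G'')$ embeds into $G'/G''=M_r'$, which is $\mathbb Z[A_r]$‑torsion‑free by the Magnus embedding and therefore $\mathbb Z[H/H']$‑torsion‑free; as $\bar g-1$ is not a unit, $\bigl(1-\beta(\bar g-1)\bigr)\cdot(\text{image of }f)=0$ forces the image of $f$ to be zero, so $H'/H''=0$, $H'=H''=\dots=H^{(d)}=1$, and $H=\langle g\rangle$ is cyclic — a contradiction. Hence $r_{ab}(H)=2$; $\bar g,\bar f$ form a basis of $\bar H$, which I extend to a basis of $A_r$ and lift to a basis $z_1,\dots,z_r$ of $G$, writing $g=z_1c_1$, $f=z_2c_2$ with $c_1,c_2\in G'$.

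\textit{$H$ has derived length $d$.} The key local fact is $[g,f]\notin G''$. Working in $M_r=G/G''$ and computing $\partial_1([g,f])$ in $\mathbb Z[A_r]$ by the derivation rule of the proof of Lemma~\ref{le:4} (using $\partial_i(b^\alpha)=\alpha\,\partial_i(b)$ for $b$ in the commutant), one finds that $[g,f]\in G''$ would force $\partial_1(c_1)|_{\bar z_1=1}=-1$, which is incompatible with the relation $\sum_j\partial_j(c_1)(\bar z_j-1)=0$ valid for $c_1\in G'$ (the resulting identity would put an element of the ideal $(\bar z_2-1,\dots,\bar z_r-1)$ equal to $(\bar z_1-1)$ times a unit). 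Then I build nested commutators $a_1=[g,f]$, $a_{m+1}=[a_m,a_m^{\,g}]$; inductively $a_m\in G^{(m)}\cap H=H^{(m)}$ and its image in $G^{(m)}/G^{(m+1)}$ is nonzero, since modulo $\gamma_3(G^{(m)})\supseteq G^{(m+2)}$ one has $a_{m+1}\equiv\bar a_m\wedge(\bar g\cdot\bar a_m)$ in $\gamma_2(G^{(m)})/\gamma_3(G^{(m)})\cong\Lambda^2\!\bigl(G^{(m)}/G^{(m+1)}\bigr)$ (valid because $G^{(m)}=S_{r,d}^{(m)}$ is free solvable of class $d-m$), and this wedge is nonzero: $\bar a_m\ne 0$, $G^{(m)}/G^{(m+1)}$ is $\mathbb Z[G/G^{(m)}]$‑torsion‑free, and the image of $g$ in $G/G^{(m)}=S_{r,m}$ is not a $\mathbb Z$‑multiple of $1$ in $\mathbb Z[S_{r,m}]$. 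In particular $a_{d-1}\ne 1$ lies in $H^{(d-1)}$, so $H^{(d-1)}\ne 1$.

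\textit{The isomorphism.} I would finish by induction on $d$ ($d=1$ being $H=\bar H\cong\mathbb Z^2=S_{2,1}$). For $d\ge 2$ the image $\tilde H$ of $H$ in $G/G^{(d-1)}=S_{r,d-1}$ is verbally closed (Lemma~\ref{le:3}), non‑cyclic, and has the same rank‑$2$ direct‑factor abelianization, so $\tilde H\cong S_{2,d-1}$ on the images of $g,f$ by the inductive hypothesis; since $H/H^{(d-1)}=H/(H\cap G^{(d-1)})=\tilde H$, the epimorphism $\varphi$ induces an isomorphism modulo the $(d-1)$‑st derived subgroups, whence $\ker\varphi\subseteq S_{2,d}^{(d-1)}$ and $\varphi$ restricts to an epimorphism of $\mathbb Z[S_{2,d-1}]$‑modules $S_{2,d}^{(d-1)}\twoheadrightarrow H^{(d-1)}=H\cap G^{(d-1)}$. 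The source is a torsion‑free module of rank $1$ (Magnus embedding), the target is torsion‑free (a submodule of the $\mathbb Z[S_{r,d-1}]$‑torsion‑free module $G^{(d-1)}$) and nonzero by the previous paragraph, so such an epimorphism must be injective; hence $\ker\varphi=1$ and $H\cong S_{2,d}$. I expect the Fox‑calculus computation establishing $[g,f]\notin G''$, together with keeping the wedge‑product and torsion‑freeness bookkeeping in the nested‑commutator step rigorous, to be the main technical obstacle; the rest is an assembly of Lemma~\ref{le:3}, Proposition~\ref{pr:1}, the Magnus embedding, and the structure theory of free solvable groups.
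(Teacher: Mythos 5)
Your argument is correct in substance, but it takes a genuinely different route from the paper's. For the rank statement the paper, after Lemma \ref{le:3}, inducts on $d$ to reduce to the case $f\in G^{(d-1)}$ and then uses verbal closedness once more, on the single split equation $f(x_1,\dots,x_r)=f$: a solution in $H$ has the form $h_i=g^{t_i}f^{\alpha_i}$, which forces $f^{1-\delta}=1$ with $\delta$ in the augmentation ideal, contradicting the absence of module torsion in $G^{(d-1)}$; the freeness statement is then a direct citation of Baumslag's theorem \cite{B}, needing nothing beyond the rank-$2$ abelianization condition. You instead rule out rank $1$ one level down, via the cyclic $\mathbb Z[H/H']$-module $H'/H''$ generated by $(\bar g-1)\cdot[f]$ and torsion-freeness of $G'/G''$, using verbal closedness only through Lemma \ref{le:3}, Proposition \ref{pr:1} and the identities $H\cap G^{(i)}=H^{(i)}$ (a legitimate extension of the paper's $i=1$ argument, but one you should state and prove explicitly); and you replace the appeal to \cite{B} by a from-scratch proof: Fox derivatives to get $[g,f]\notin G''$, nested commutators with the isomorphism $\gamma_2(G^{(m)})/\gamma_3(G^{(m)})\cong\Lambda^2\bigl(G^{(m)}/G^{(m+1)}\bigr)$ to get derived length $d$, and an Ore-domain rank count to show $\ker\bigl(S_{2,d}\to H\bigr)=1$. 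What each buys: the paper's proof is far shorter, and its freeness step is independent of verbal closedness; yours is self-contained modulo standard torsion-freeness facts, but needs that fact for every quotient $G^{(m)}/G^{(m+1)}$ rather than only the top one, and essentially reproves a special case of Baumslag's theorem. Two small repairs: in the $[g,f]\notin G''$ step the clean contradiction is that $\partial_1(c_1)\vert_{a_1=1}=-1$ has augmentation $-1$ while the exponent sum of $z_1$ in $c_1\in G'$ is $0$ (your parenthetical about an ideal element equalling $(\bar z_1-1)$ times a unit is not quite what the identity gives); and in the induction of the last part you need the inductive statement in the stronger form ``free solvable with basis $\{g,f\}$,'' which your argument does in fact deliver and should record.
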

\begin{proof}  By Lemma \ref{le:3}, the images of  $g, f$  in
 $A_r$ generate a verbally closed subgroup,  that is a direct factor of full group.
It remains to prove, that this factor is non-cyclic. 
  By Lemma  \ref{le:3} and induction on  $d$ we assume that the image of  $H$ in $S =G/G^{(d-1)}$ is generated by the image $\overline{g}$ of $g$, and $f$ lies in $G^{(d-1)}.$  Let $f (z_1,…,z_r)$ be an expression 
of  $f$ as a word in generators of  $G.$ Equation $f (x_1,…,x_r) = f$ is solvable in  $G.$ Hence, it has a solution   $h_1,…,h_r$ in $H.$
The components of this solution are written as  $h_i = g^{t_i}
f^{\alpha_i}, t_i \in \mathbb Z,$ where $ \alpha_i\in \mathbb Z[\overline{g}],\, i = 1, …, r.$ Then $f
(h_1,…,h_r) = f (g^{t_1},…,g^{t_r}) f^\delta,$ where
$\delta\in \mathbb Z[\overline{g}]$ is easily to compute by collecting  exponents of $f$ in $f (h_1,…,h_r).$ It remains to note that $\delta$ lies in the fundamental ideal of 
$\mathbb Z[\overline{g}],$ because $f (x_1,…,x_r)$ is a commutator word.
 Then $f^{1-\delta} = 1$ for $1 - \delta \not= 0$  that is impossible in view of well-known fact that  
$G^{(d-1)}$ has no module torsion (see for example  \cite{Romess}). By Baumslag's theorem  \cite{B} $H = \langle g, f \rangle$ is a free solvable group of rank $r$ and class  $d$ with basis consisting of the elements described above.

\end{proof}

The following statement was proved in  \cite{Romeqmet} (see also \cite{Romess})

\begin{lemma} (\cite{Romeqmet}).
\label{le:7}
Let  $M_2$ be the free metabelian group of rank 
 $2$ with  basis $\{z_1, z_2\}.$ Let $x_1=g_1, x_2=g_2$  be
a solution of  $$ [x_1, x_2, x_2, x_1] \equiv  [z_1, z_2, z_2,
z_1].$$ Then $g_i\equiv  z_i^{\varepsilon_i}(mod\  M_2'),
\varepsilon_i\in \{\pm 1\}.$
\end{lemma}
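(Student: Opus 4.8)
The plan is to work in the free metabelian group $M_2$ with basis $\{z_1,z_2\}$, where the commutant $M_2'$ is a module over $\mathbb{Z}[A_2]$, $A_2 = M_2/M_2'$ being free abelian of rank $2$ with images $a_1,a_2$ of $z_1,z_2$. Writing a solution $x_i = g_i$ and passing to the abelianization, set $\bar g_i \in A_2$. First I would apply a Fox-derivative computation to the equation $[x_1,x_2,x_2,x_1]\equiv [z_1,z_2,z_2,z_1]$: the element $c=[z_1,z_2,z_2,z_1]$ lies in $M_2''$-free part of $M_2'$, and its Magnus embedding / Fox derivatives $\partial_1(c), \partial_2(c) \in \mathbb{Z}[A_2]$ are explicit — up to units they are $\partial_1(c) = -(1-a_1)(1-a_2)$ (times a unit) and $\partial_2(c) = (1-a_1)^2(1-a_2)$ (times a unit), reflecting the commutator structure $[z_1,z_2,z_2,z_1]$. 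The key point is that these are nonzero and that the equality of the two sides of the identity forces, via the chain rule for Fox derivatives, the relation $\partial_j(c)[\bar g_1,\bar g_2]\cdot \partial_i(g_i)$-sums to equal $\partial_j(c)$ for $j=1,2$.

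Next I would use the norm filtration $\omega$ on the fundamental ideal $\Delta \subseteq \mathbb{Z}[A_2]$, exactly as in the proof of Lemma 4: if some $\bar g_i$ were nontrivial in $A_2$ but lay in a position making $1-\bar g_i$ have norm $\geq 2$, or worse if some $\bar g_i$ were trivial, the norms of the two sides of the Fox-derivative identity would not match. This shows first that neither $\bar g_1$ nor $\bar g_2$ is trivial, and then, comparing leading terms, that $\{\bar g_1,\bar g_2\}$ must itself generate $A_2$ (equivalently the matrix of the induced endomorphism of $A_2$ is invertible over $\mathbb{Z}$). Then I would invoke the cited result from \cite{Timsolv} on equalities of the form $(1-s^m)\alpha = (1-y^m)\beta$ in $\mathbb{Z}[S]$ — or rather its specialization to $\mathbb{Z}[A_2]$, which has no zero divisors — to cancel the factors $(1-a_1^{?})$ and $(1-a_2^{?})$ appearing in $\partial_1(c),\partial_2(c)$ one at a time, concluding that $\bar g_i = a_i^{\varepsilon_i}$ with $\varepsilon_i \in \{\pm 1\}$, which is exactly $g_i \equiv z_i^{\varepsilon_i} \pmod{M_2'}$.

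The main obstacle I anticipate is bookkeeping the Fox derivatives of the specific word $[z_1,z_2,z_2,z_1]$ precisely enough to see that the factorization into $(1-a_1)$- and $(1-a_2)$-factors is exactly of the shape needed to run the cancellation from \cite{Timsolv}: one must check that no unwanted common factor or unit obstructs the step-by-step cancellation, and that the asymmetry between the two derivatives (one has $(1-a_1)$ to the first power, the other to the second) does not spoil the argument — it should in fact be what pins down $\varepsilon_1$ and $\varepsilon_2$ independently rather than only their product. A secondary subtlety is ensuring the norm comparison is carried out with the correct choice of the index $j$ minimizing $\omega(\partial_j(c))$, mirroring the device used in Lemma 4; once that is set up, the argument is essentially the same specialization-and-cancellation routine.
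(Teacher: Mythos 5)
First, a point of orientation: the paper does not actually prove Lemma~\ref{le:7}; it is imported from \cite{Romeqmet}, and the closest argument in the text is the computation in the proof of Theorem~\ref{th:4} (formulas (\ref{eq:17})--(\ref{eq:19})), which works in the nilpotent quotient modulo $\gamma_5$ and compares exponents of basic commutators of weight $4$, not with Fox derivatives. Your Fox-derivative route can be made to work, but as sketched it has a genuine gap. The correct derivatives are $\partial_1(c)=(1-a_1)(1-a_2)^2$ and $\partial_2(c)=-(1-a_1)^2(1-a_2)$ (both of norm $3$; your claimed $\partial_1(c)$ has norm $2$, and with it the norm bookkeeping would come out inconsistent). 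The chain rule turns the equation into $(1-\bar g_1)(1-\bar g_2)^2\partial_j(g_1)-(1-\bar g_1)^2(1-\bar g_2)\partial_j(g_2)=\partial_j(c)$ for $j=1,2$, and a unique-factorization/leading-form argument in $\mathbb Z[a_1^{\pm1},a_2^{\pm1}]$ does give that $1-\bar g_1$ and $1-\bar g_2$ are associates of $1-a_1$ and $1-a_2$, i.e.\ $\{\bar g_1,\bar g_2\}\subseteq\{a_1^{\pm1},a_2^{\pm1}\}$ with one generator of each kind. What it does \emph{not} give is the assignment: the swapped case $\bar g_1=a_2^{\pm1}$, $\bar g_2=a_1^{\pm1}$ is fully compatible with the two derivative identities alone. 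For instance, with $\bar g_1=a_2$, $\bar g_2=a_1$ they reduce to $(1-a_1)\partial_1(g_1)-(1-a_2)\partial_1(g_2)=1-a_2$ and $(1-a_1)\partial_2(g_1)-(1-a_2)\partial_2(g_2)=-(1-a_1)$, which are solved formally by $\partial_1(g_1)=0$, $\partial_1(g_2)=-1$, $\partial_2(g_1)=-1$, $\partial_2(g_2)=0$. The asymmetry of the two derivatives, which you hope will finish the proof, only concerns signs; it does not rule out the swap.

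To close the gap you must use information about $g_1,g_2$ beyond their two derivatives: either the fundamental Fox identity $\partial_1(g)(a_1-1)+\partial_2(g)(a_2-1)=\bar g-1$ characterizing derivative pairs of genuine group elements (imposing it in the swapped case forces, after cancelling $(1-a_2)$, a relation of the form $s(a_1-1)+t(a_2-1)=-2$, impossible since the left side lies in the fundamental ideal; the other sign combinations are analogous), or the paper's own device from the proof of Theorem~\ref{th:4}: modulo $\gamma_5M_2$ the exponent of the basic commutator is $\alpha_1=(k_1m_2)^2-(k_2m_1)^2=1$, whose \emph{sign} excludes the swapped case (there $\alpha_1=-(k_2m_1)^2\le 0$). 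A secondary defect is your appeal to the $(1-s_1^m)\alpha=(1-y_1^m)\beta$ result of \cite{Timsolv}: that statement requires choosing $m$ larger than the support data of $\alpha$, which is possible in Lemma~\ref{le:4} only because the equation there is engineered to contain factors $(1-x^{m})(1-x^{-m})$ with $m$ at our disposal; here the equation is fixed and only first powers occur, so that lemma does not apply and should be replaced by the direct divisibility argument in the UFD $\mathbb Z[a_1^{\pm1},a_2^{\pm1}]$. With these two repairs (correct derivatives plus the fundamental identity or the mod-$\gamma_5$ sign count), your plan becomes a valid proof, but it is then a different route from the paper's, which stays entirely inside the nilpotent quotient.
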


Here and further  $[g_1, g_2, ..., g_k]$ means a simple 
commutator that is defined inductively:  $[g_1, g_2, ... ,
g_k]=[[g_1, g_2, ... , g_{k-1}], g_k].$

\begin{center}

{\bf \S2. PROOFS OF THE MAIN RESULTS}

\end{center}

{\bf Proof of Theorem   \ref{th:1}.} Denote $G =S_{r,d}.$
We use induction on  $d.$ The case  $d=1$ is trivial.
Suppose that $r, d \geq 2.$ By induction  and Lemma  \ref{le:3} we can choose a basis 
$\{z_1,\ldots,z_r\}$ of $G$ in such way that $$H = \langle z_1 c_1,
c_2, \ldots, c_m \rangle,$$  where $c_1,\ldots,c_m \in G^{(d-1)}.$
The quotient $G/G^{(d-1)}$  is isomorphic to $S_{r,d-1}.$ Denote $C
= G^{(d-1)}$ and $A = G/C.$ Let $\{a_1, ..., a_r\}$ be a basis of 
 $A$ corresponding to $\{z_1, ..., z_r\}.$

 We'll show that each  $c_i$  is equal to $1.$
We write any $c_i$ in the form
$c_i = c_i(z_1,\ldots,z_r),$ where  $c_i(x_1,x_2,\ldots, x_r)$ is a word that lies in the  $(d-1)$th member of the derived series of  the free group $F(X_r), X_r = \{x_1, ..., x_r\}$. Since $H$ is verbally closed
there exist elements $h_1, ..., h_r \in H,$ such that
\begin{equation}
\label{eq:5}
c_i = c_i(h_1, \ldots, h_r).
\end{equation}
 From 
(\ref{eq:5}) we get the equalities:
\begin{equation}
\label{eq:6}
 c_i = {c_1}^{-\delta_{i1}} \ldots {c_i}^{-\delta_{ii}}\ldots {c_m}^{-\delta_{im}}, i=1, ..., m,
 \end{equation} where $ \delta_{ij}(a_1)\in \mathbb Z[a_1^{\pm 1}].$  Let
$\nu_{i} = 1 + \delta_{ii}.$ Each element 
$\delta_{ij}$  for $i\not= j$  lies in 
ideal $id(a_1-1)$ of $\mathbb{Z}[a_1^{\pm 1}],$  and any  $\nu_{i}=1$  modulo this ideal.

Then  (\ref{eq:6}) implies a system of equalities:

$$c_1^{\nu_1} c_2^{\delta_{12}} \ldots c_m^{\delta_{1m}} =1,$$
$$c_1^{\delta_{21}} c_2^{\nu_2} \ldots c_m^{\delta_{2m}} =1,$$
$$ \ldots  \ldots  \ldots  \ldots $$
$$c_1^{\delta_{m1}} c_2^{\delta_{m2}} \ldots c_m^{\nu_m} =1.$$
Compute Fox derivations of the left sides that are equal to $0.$ 
$$
\nu_1\partial_j(c_1) + \delta_{12}\partial_j(c_2) + \ldots +
\delta_{1m}\partial_j(c_m)= 0, $$
$$\delta_{21}\partial_j(c_1) + \nu_2\partial_j(c_2) + \ldots +
\delta_{2m}\partial_j(c_m)= 0,$$
\begin{equation}
\label{eq:7}
 \ldots  \ldots  \ldots  \ldots
 \end{equation}
$$\delta_{m1}\partial_j(c_1) + \delta_{m2}\partial_j(c_2) + \ldots +
\nu_{m}\partial_j(c_m)= 0, $$
\noindent
 where $j = 1, \ldots, m.$ Determinant of each system  (\ref{eq:7}) is $1$ modulo  $id(a_1 -
1),$ thus it is not  $0.$ Hence the system has the only zero solution.  It follows that each  $c_i$ is $1.$  By Lemma  \ref{le:5} we finish the proof.

$\square$

{\bf Proof of Theorem \ref{th:2}}).   Denote $G=S_{r,d}$.
We use induction on  $d.$ The case  $d = 1$ is trivial.
We assume that $r,d \geq 2.$ By  induction  and Lemma 
 \ref{le:3} we construct a basis  $\{z_1,\ldots,z_r\}$ of $G$
such that $$H = \langle z_1c_1, \ldots, z_rc_r,
c_{r+1},...,c_m \rangle,$$ where $c_1,\ldots,c_r,
c_{r+1},...,c_m \in C.$ It is enough to check that each element 
$c \in C$ lies $H^{(n-1)}.$

Denote  $C=G^{(d-1)}, A = G/C \simeq S_{r,d-1}$.
  Abelian group $C$ is a right  $\mathbb{Z}[A]-$module in which action of  $a\in A$ to $c \in C$ is defined as $a^{-1}ca.$
  Elements
$a_i=z_iC,\,\, i=1,\ldots,r,$ give a basis of $A.$  Let 
$T$ be a free right  $\mathbb{Z}[A]-$module with basis
$\{t_1,\ldots, t_r\}.$ We take the Magnus embedding of  $G$ into split extension
 $$M = \left(\begin{array} {ll}
A & 0 \\
T & 1
\end{array}\right).$$
See details in  \cite{Timbook}, \cite{Romess}. Any element
  $z_i$ maps to  $$\left(\begin{array} {ll}
a_i & 0 \\
t_i & 1
\end{array}\right).$$
The image of  $C$ in  $M$
is a submodule of 
  $T,$ that is identified with $C.$ An element  $t_1 u_1+ \ldots t_ru_r,\,\,u_i \in \mathbb Z[A],$
lies in  $C$ if and only if 
\begin{equation}
\label{eq:8}
u_1(a_1-1) + \ldots
+u_r(a_r-1) = 0.
\end{equation}
The image of   $C$ in  $M$
is a submodule of 
  $T,$ consisting of all elements that satisfy to  (\ref{eq:8}). We keep its denotion  $C.$

 It is well-known that  $ \mathbb{Z}[A]$ satisfies to the right Ore condition 
 \cite{L}, i.e., for any pair of non-zero elements elements
$\alpha , \beta \in\mathbb{Z}[A]$ there are non-zero elements  $\mu ,\nu \in \mathbb{Z}[A]$ such that  $\alpha \mu= \beta \nu  \neq 0.$ Since $\mathbb{Z}[A]$ is also a domain
it is embedded into a sfield of fractions $P.$ Then  $T$
embeds into  a linear space  $V$ of dimension  $r$ over
$P.$ By (\ref{eq:8}) we conclude that the dimension of the  linear subspace
$C^P$ generated by  $C$ in $V$ is $r-1.$

Let we consider a subset of non-trivial elements  \{$w_i = w_i(z_1,
z_i),\,\,i=2,\ldots,r$\} of $C.$ It is easily compute that the image of   $w_i$ under Magnus embedding   in  $M$
(more exactly in $T$) has the form $t_1\gamma_1 + t_i\gamma_i,$
where  $\gamma_1$ and $\gamma_i$ are non-zero. Independent of  $t_1\gamma_1 + t_i\gamma_i, \,i=2,\ldots,r$ in $T$
is clear. We are to check that elements   $w_i(z_1c_1,
z_ic_i),\,i=2,\ldots,r,$ are linearly independent over  $\mathbb
Z[A].$

Denote by $G_1$ the subgroup generated by elements  
$z_1c_1,\ldots, z_rc_r.$ By Baumslag's theorem \cite{B} the map
$$\varphi : z_i \mapsto z_ic_i, \,\,\,i= 1,\ldots,r,$$ extends to isomorphism  $G$ and $G_1,$ and then  to isomorphism of  $C$
and $G_1^{(d-1)}$, and also to isomorphism of $\mathbb Z[A]$ and $\mathbb
Z[G_1/G_1^{(d-1)}].$ Moreover,   $\mathbb Z[A]-$module $C$
is isomorphic to  $\mathbb Z[G_1^{(d-1)}]-$module $G_1^{(d-1)}.$  Any of these isomorphisms is denoted by  $\varphi .$

  We are to check that elements  $ w_i(z_1c_1, z_ic_i),\,\,i= 2,\ldots, r$ are independent over 
$\mathbb Z[A].$

 Suppose that it is not happen:
 $$w_2(z_1c_1, z_2c_2)^{\beta_2(z_1,\ldots, z_r)} \ldots
w_r(z_1c_1, z_rc_r)^{\beta_r(z_1,\ldots, z_r)} =1$$ for some
$\beta_i \in \mathbb Z[A],$ at least one of them is not $0$.
Hence $$w_2(z_1c_1, z_2c_2)^{\beta_2(z_1c_1,\ldots,
z_rc_r)} \ldots w_r(z_1c_1, z_rc_r)^{\beta_r(z_1c_1,\ldots,
z_rc_r)} =1.$$  We apply  $\varphi^{-1}$ and get a contradiction 
with independence of  $w_2,\ldots,w_r$ over
$\mathbb Z[A].$

We are to check that elements  $ w_i(z_1c_1, z_ic_i),\,\,i=
2,\ldots, r,$ are linearly independent not only over  $\mathbb Z[A],$ but over 
$P$  too.
 Elements of  $P$ are equivalency classes of  $\mathbb Z[A] \times \mathbb Z[A],$
 that usually denoted as  $\alpha \beta^{-1}$ for $ 0 \neq
 \beta,\alpha\in \mathbb Z[A].$ Suppose that there is a non-trivial linear combination over $P$:
 $$w_2(z_1c_1, z_2c_2)\alpha_2\delta_2^{-1} + \ldots + w_r(z_1c_1,
 z_rc_r)\alpha_r\delta_r^{-1}= 0.$$
Since   $\mathbb Z[A]$ is the Ore domain every a finite set of its elements has a right common factor.  Take such factor $\delta $ for  $\delta_2,\ldots, \delta_r.$  By definition $0 \neq \delta =
\delta_i\gamma_i,\,\,i=2,\ldots,r, \gamma_i \in \mathbb Z[A].$
Then
$$w_2(z_1c_1, z_2c_2) {\alpha_2 \gamma_2} + \cdot + w_r(z_1c_1,
z_rc_r){\alpha_r\gamma_r} =0.$$ This contradicts to linear independence of elements $ w_i(z_1c_1, z_ic_i),\,\,i= 2,\ldots, r,$ over $\mathbb
Z[A].$

 We proved that elements $w_2(z_1c_1,z_2c_2), \ldots, w_r(z_1c_1,z_rc_r)$
are linearly independent over $P.$

There are elements $0 \neq \sigma, \sigma_1, \ldots,
\sigma_{r-1}\in P$ such that
$$c^{\sigma} = w_2(z_1c_1, z_2c_2)^{\sigma_1} \ldots
w_r(z_1c_1, z_r c_r)^{\sigma_{r-1}}.$$ By multiplying all 
 $\sigma, \sigma_1, \ldots, \sigma_{r-1}$ to available element  
$u \in \mathbb Z[A]$, we can assume that $\sigma,
\sigma_1, \ldots, \sigma_{r-1} \in \mathbb Z[A].$ Thus $c^\sigma
\in H^{(d-1)}.$ Lemma \ref{le:4} implies that  $c \in
H^{(d-1)}.$

$\square$

{\bf Proof of Theorem   \ref{th:3}.}
 By Lemma \ref{le:6}   $H$ is  solvable of rank $2.$ By Baumslag's theorem
 \cite{B} it isomorphic to $S_{2d}.$ By Lemma
\ref{le:1} $H$ contains a strong test element. By Lemma  \ref{le:2} 
$H$ is a retract.

$\square$

{\bf Proof of Theorem   \ref{th:4}.} Denote $G = M_r.$
Let $\{z_1, ..., z_r\}$  be a basis of   $G$, and let $\{a_1, ...,
a_r\}$ be the corresponding basis of $A_r=G/G'.$

By Lemma \ref{le:3} we can assume that  $H=\langle h_1, ..., h_{l+1}, h_{l+2}, ... h_{l+t}\rangle$, where
  $h_i\equiv z_i (mod\  G'),\  i = 1, ..., l+1,$ and $h_{l+k}\in G', k =2, ..., t.$

  The case $l=0$ was proved in Theorem \ref{th:1}.
 Suppose that $l \geq 1.$

   For any $g \in G$  by $\bar{g}$\,  we denote its image in  $A_r.$ Then $\bar{h}_i = a_i,\  i
= 1, ..., l+1; \bar{h}_{l+k}=1,\  k= 2, ..., t.$  Hence $\bar{H} = A_{l+1}= \langle a_1, ..., a_{l+1}\rangle .$ By Baumslag's theorem  \cite{B} subgroup $H_{l+1} = \langle h_1, ..., h_{l+1} \rangle$
is isomorphic to $M_{l+1}$, and $\{h_1, ..., h_{l+1}\}$  is a basis of
 $H_{l+1}$. In particular, every map of this basis to $G$ extends to a homomorphism  $H_{l+1} \rightarrow G.$

In the proof that follows we show that there is a homomorphism   $\psi : G \rightarrow H$ identical on $H_{l+1},$ and then  we show that it is identical on  $H,$ i.e., it is a retraction of $G$ onto $H.$

Let $h_i = h_i(z_1, ..., z_r), i = 1, ..., l+1$ be expressions fixed generators of
 $H_{l+1}$ as words in basic elements of 
 $G$. For any $i\in \{2, ..., l+1\}$  we consider equation
\begin{equation}
\label{eq:15}
 [h_i(x_1, ..., x_r), h_1(x_1, ..., x_r), h_1(x_1, ..., x_r),
h_i(x_1, ..., x_r)] = [h_i, h_1, h_1, h_i].
\end{equation}
This equation is solvable in  $G$: $x_j=z_j, j = 1, ...,
r.$ Hence it has a solution in  $H$: $x_j = f_j, f_j \in H, j = 1,
..., r.$

We have:
$$ [h_i(f_1, ..., f_r), h_1(f_1, ..., f_r), h_1(f_1, ..., f_r),
h_i(f_1, ..., f_r)] \equiv [h_i, h_1, h_1, h_i](mod \
\gamma_5G),$$
\begin{equation}
\label{eq:16}
 i=2, ..., l+1.
\end{equation}

Denote $\tilde{h}_j = h_j(f_1, ..., f_r) , \ j =1, i.$ Let $\tilde{h}_1\equiv \prod_{j=1}^{l+1}z_j^{k_j} (mod \,  G'),
\tilde{h}_i\equiv \prod_{j=1}^{l+1}z_j^{m_j} (mod \, G'), \ k_j, m_j \in \mathbb{Z}, \ j =1, ..., l+1.$  Then (\ref{eq:16})  is equivalent to

\begin{equation}
\label{eq:17}
 [\prod_{j=1}^{l+1}z_j^{m_j},\prod_{j=1}^{l+1}z_j^{k_j}, \prod_{j=1}^{l+1}z_j^{k_j},\prod_{j=1}^{l+1}z_j^{m_j}] \equiv [z_i, z_1, z_1, z_i](mod \
\gamma_5G).
\end{equation}
Quotient $G_5=G/\gamma_5G$ is a free group of variety $\mathfrak{A}^2\cap \mathfrak{N}_5$ of all metabelian nilpotent groups of nilpotency class $\leq 5$. Its the last non-trivial member of the low central series   $\gamma_4G_5$ is a free abelian group with basis consisting of all basic commutators of length $4$ of the form   $[z_{i_1}, z_{i_2}, z_{i_3}, z_{i_4}],$ where $i_1>i_2, i_2\leq i_3 \leq i_4.$ Any metabelian group satisfies to identities 
 $[x_1, x_2,x_3, ..., x_n]=[x_1, x_2, x_{\tau (3)}, ..., x_{\tau (n)}]$, where $n\geq 3$, $\tau $ is a substitution on  $3, ..., n.$  $G_5$ satisfies to identity $[x_1^{k_1}, x_2^{k_2}, x_3^{k_3}, x_4^{k_4}] = [x_1, x_2, x_3,x_4]^{k},\, k =\prod_{j=1}^4k_j.$ See details in  \cite{Romess}.

These facts mean that after rewriting of the left side of  (\ref{eq:17})  as a product of exponents of basic commutators  of length  $4$ these exponents will be  $0$ with just one exception for exponent of   $[z_i, z_1, z_1, z_i]$, which in correspondence with the right side of  (\ref{eq:17}) should be   $1.$ We write a product of all basic commutators  in this expression that depend of  $z_1$ and $z_i$ only.
\begin{equation}
\label{eq:18}
[z_i,z_1, z_1, z_i]^{\alpha_1}[z_i, z_1, z_1, z_1]^{\alpha_2}[z_i, z_1, z_i, z_i]^{\alpha_3},
\end{equation}
\noindent
where
\begin{equation}
\label{eq:19}
\alpha_1 =  (k_1m_i-k_im_1)(k_1m_i+k_im_1) , \alpha_2=(k_1m_i-k_im_1)k_1m_1  , \alpha_3  =  (k_1m_i-k_im_1)k_im_i.
\end{equation} A solution of  (\ref{eq:19} corresponds to values: $\alpha_1=1, \alpha_2=\alpha_3=0.$    Note that $\delta = (k_1m_i-k_im_1) \not=0, \delta \in \{\pm1\},$ thus, $k_1m_1= k_im_i = 0.$ Consider two the cases. 

1) $k_1=0 \Rightarrow \delta = -k_im_1 \Rightarrow k_i\not= 0\,  \& \,  m_i=0 \Rightarrow \alpha_1 = -k_i^{2}m_1^2 = 1,$
contradiction.

2) $m_1= 0 \Rightarrow \delta = k_1m_i \not= 0  \Rightarrow m_i\not= 0\,  \&\,  k_i = 0 \Rightarrow \alpha_1 = k_1^{2}m_i^2 = 1 \Rightarrow k_1=\varepsilon_1\in \{\pm 1\}, m_i=\varepsilon_i\in \{\pm 1\},$ which gives  a solution of (\ref{eq:18}).

Consider  (\ref{eq:16}) again.
It follows from our proof that the image of  $\tilde{h}_1$ in $A_r$ does not depend of $a_i,$ and the image of   $\tilde{h}_i$ does not depend of $a_1.$ Obviously these both the images as image of any element of   $H$, does not depend of   $a_{l+2 }, ...,a_r.$ The image of  $\tilde{h}_j$ includes  $a_j$ in exponent  
$\varepsilon_j\in \{\pm 1\}, j =1, i.$

Consider a system of equations 
$$ [h_i(x_1, ..., x_r), h_1(x_1, ..., x_r), h_1(x_1, ..., x_r), h_i(x_1, ..., x_r)] = [h_i, h_1, h_1,
 h_i], $$
 \begin{equation}
 \label{eq:20}
 i = 2, ..., l+1.
 \end{equation}

 Here as above  $h_i(x_1, ..., x_r)$ corresponds to  $h_i=h_i(z_1, ..., z_r)$
 that is an expression of    $h_i$  by the basic elements of   $G$.

 Since $G$ is $l$-verbally closed this system has a solution in   $G$: $x_j=z_j, j = 1, ..., r.$ Hence it has a solution in  $H$: $x_j = f_j, f_j \in H, j = 1, ..., r.$ Denote $u_i = h_i(f_1, ..., f_r), i = 1, ..., l+1.$
Let $\nu $ be an endomorphism of  $G$ defined by the map $\nu : z_j \mapsto f_j, j = 1, ..., r.$ Note that $u_j=\nu (h_j(z_1, ..., z_r)=\nu (h_j), \ j = 1, ..., r.$
Then  $\nu (G)\leq H$. The following equalities are valid:
 \begin{equation}
 \label{eq:21}
  [u_i, u_1, u_1, u_i] = [h_i, h_1, h_1, h_i], i = 1, ..., l+1.
  \end{equation}
  
 In other words $\nu $ fixes every of elements  $[h_i, h_1, h_1, h_i], i = 1, ..., l+1.$
 The same is true for   $\eta =\nu^2$.
  By arguments similar to ones about  \cite{eq:15} applied to each of equations   (\ref{eq:20}) we get  congruence
\begin{equation}
\label{eq:22}
 u_1 \equiv h_1^{\varepsilon_1} (mod \ G'), \varepsilon_1\in \{\pm 1\}.
 \end{equation}
 By direct computation we obtain congruences $u_i \equiv h_i^{\varepsilon_i} (mod \ G'), \varepsilon_i\in \{\pm 1\}, i =2, ..., l+1.$ Let for example  $u_2\equiv h_2^{varepsilon_2}h_3^{\lambda_3} ... h_{l+1}^{\lambda_{l+1}, \, \varepsilon_2\in \{\pm 1}$ (this was proved above), $\lambda_i\in \mathbb{Z}$ and at least one  of $\lambda_j$ is not $0$, let be  $\lambda_3.$ Then in expression of $[u_2,u_1,u_1,u_2]$ as a product of exponents of basic commutators modulo $\gamma_5H$ of $[u_2,u_1,u_1,u_2]$ we have  $[h_3, h_1, h_1, h_3]$ with exponent 
 $\lambda_3^2\not= 0,$ that contradics to (\ref{eq:21}).

 Denote $w_j=\nu (u_j) =\eta (h_j),$ then  $ w_i \equiv z_i (mod \  G')$ and $w_i\equiv h_i (mod\ H'), i=1, ..., l+1.$
 We have a system of equalities
 \begin{equation}
 \label{eq:23}
  [w_i, w_1, w_1, w_i] = [h_i, h_1, h_1, h_i], i = 1, ..., l+1.
  \end{equation}

 We use congruences $w_i \equiv h_i (mod \ H'), \ i=1, ..., l+1.$  Let $w_i=c_ih_i, c_i\in H', i =1, ..., l+1.$ Then
\begin{equation}
\label{eq:18}
  [w_i, w_1, w_1, w_i] = [h_i, h_1, h_1, h_i]c_i^{(1-a_1)^2(1-a_i)}c_1^{(1-a_i)^2(a_1-1)},  i = 2, ...,
  l+1.
  \end{equation}

  After cancellations in exponents we obtain
  \begin{equation}
  \label{eq:19}
  c_1^{1-a_i}=c_i^{1-a_1}, i = 2, ..., l+1.
  \end{equation}
  Hence there is    $d\in H'$   such that $c_1=d^{1-a_1}, c_i=d^{1-a_i}, i=2, ..., l+1.$
 The $\eta $ is inner automorphism  corresponding to conjugation by  $d\in H'.$ Define endomorphism $\psi$ as composition of  $\eta$ with inner automorphism corresponding to conjugation by   $d^{-1}.$ Then $\psi$ maps  $G$ to $H$ and is identical on  $H_{l+1}.$

 It remains to prove that   $\psi$ is identical on 
   $H.$

Let $h=h_{l+k}\, (k=2, ..., t)$ be one of the chosen generators of   $H.$
 Then there is  $0\not= \alpha_k \in \mathbb{Z}[a_1^{\pm 1}, ..., a_{l+1}^{\pm 1}]$, such that
 $h^{\alpha_k}\in H_{l+1}.$

   Let $h(z_1,…,z_r)$ be an expression of  $h$ as a word in basic elements of 
  $G.$ Then
\begin{equation}
\label{eq:20}
h(x_1,…,x_r) = h
\end{equation}
\noindent
 is solvable in  $G.$ Hence it has a solution 
 $f_1,…,f_r$ in $H.$ We substitute this solution to  (\ref{eq:20}) and select in the left side  $h_{l+j}, \, j=2, ..., t.$ We obtain 
\begin{equation}
\label{eq:27}
 \tilde{h}\prod_{j=2}^th_{l+j}^{\alpha_{kj}} = h, \, \tilde{h}\in H_{l+1}, \alpha_{kj}\in \mathbb{Z}[A_{l+1}]=\mathbb{Z}[a_1^{\pm 1}, ..., a_{l+1}^{\pm 1}].
 \end{equation}
We write this for every equation in  (\ref{eq:27}) for $k = 1, ..., t,$ and obtain a system
\begin{equation}
\label{eq:28}
\tilde{h}_{l+k}=h_{l+k}^{1- \alpha_{kk}}\prod_{j=2, j\not= k}^th_{l+j}^{-\alpha_{kj}}, \, \tilde{h}_{l+k}\in H_l, \alpha_{kj}\in \mathbb{Z}[A_l]=\mathbb{Z}[a_1^{\pm 1}, ..., a_l^{\pm 1}], \, k=2, ..., t.
\end{equation}

 It remains to note that all  $\alpha_{ij}$  in this equality lie in the fundamental ideal  of the group ring
 $\mathbb Z[A_{l+1}],$ because
 $h_{l+k} (x_1,…,x_r)$ is commutator word for every   $k=2, ..., t.$ Then ${1-\alpha_{kk}}$ is
$1$ modulo the fundamentsl ideal. It follows that determinant  of the matrix consisting of exponent of the right side in (\ref{eq:28}) is not $0.$ If consider  $h_{l+k}, \, k =2, ..., t,$ as unknowns the system is solvable over the field of fractions of the ring $\mathbb{Z}[A_{l+1}].$ Applying  to (\ref{eq:28}) the Gauss elimination process we get
\begin{equation}
\label{eq:29}
h_{l+k}^{\alpha_k}=\tilde{h}_k, \, \alpha_k \in \mathbb{Z}[A_l], \, \tilde{h}_k\in H_{l+1}, \, k=2, ..., t.
\end{equation}

The endomorphism $\psi $ acts identically to $H_{l+1}. $  Hence it induces identical map to 
 $A_{l+1}.$ Then
\begin{equation}
\label{eq:30}
\eta (h_{l+k}^{\alpha_k}) = (\eta (h_{l+k}))^{\alpha_k} = h_{l+k}^{\alpha_k}, \, k = 2, ..., t.
\end{equation}
It is known that $G'$ has no module tersion. Since  $\psi (h_{l+k})=h_{l+k}$ for any
$k = 2, ..., t,$ then $\psi $ is identical on $H.$ This means  that $\psi $ is a retraction  $G\rightarrow H$.

$\square$

\bigskip

\begin{center}

{\bf \S3. PROBLEMS}

\end{center}

We put a few of natural problems.

\begin{problem} Is it true that any finitely generated verbally closed subgroup of a free solvable group $S_{r,d}$ is a retract?  What is the answer in a specific case $d=2$, i.e., in the case of a free metabelian group $M_r$?
\end{problem}
More specific question:
\begin{problem} Is it true that a subgroup  $H$ of a free solvable group $S_{r,d}$ of class $d \geq 3,$
such that  $r_{ab}(H) =2$ is verbally closed if and only if $H$ is a retract of 
 $S_{r,d}?$
\end{problem}
A positive answer will strengthen  Theorem \ref{th:3}.

\end{document}